\numberwithin{equation}{section}
\newcommand{\norm}[1]{\left\Vert#1\right\Vert}
\newcommand{\norme}[1]{\left\Vert{\hskip -2.7pt}\left\vert #1 \right\vert{\hskip -2.7pt}\right\Vert}
\newcommand{\abs}[1]{\left\vert#1\right\vert}
\newcommand{\pd}[1]{\left\langle #1\right\rangle}
\newcommand{\set}[1]{\left\{#1\right\}}
\newcommand{\jm}[1]{\left[#1\right]}
\newcommand{\R}{\mathbb{R}}
\newcommand{\tuh}{\tilde{u}_h}
\newcommand{\nn}{\nonumber}
\newcommand{\ls}{\lesssim}
\newcommand{\al}{\alpha}
\newcommand{\be}{\beta}
\newcommand{\De}{\Delta}
\newcommand{\ep}{\varepsilon}
\newcommand{\ga}{\gamma}
\newcommand{\Ga}{\Gamma}
\newcommand{\la}{\lambda}
\newcommand{\na}{\nabla}
\newcommand{\om}{\omega}
\newcommand{\Om}{\Omega}
\newcommand{\pa}{\partial}
\newcommand{\ta}{\theta}
\newcommand{\dx}{\,\mathrm{d} x}
\newcommand{\N}{\mathcal{N}}
\newcommand{\cH}{\mathcal{H}}
\renewcommand{\i}{{\rm\mathbf i}}
\newcommand{\bt}{{\rm\mathbf t}}
\newcommand{\bn}{{\rm\mathbf n}}
\newcommand{\bm}{{\rm\mathbf m}}
\newcommand{\T}{\mathcal{T}}
\newcommand{\E}{\mathcal{E}}
\newcommand{\eq}[1]{\begin{align}#1\end{align}}
\newcommand{\eqn}[1]{\begin{align*}#1\end{align*}}
\title{superconvergence analysis of DG-FEM based on the polynomial preserving recovery for\\ Helmholtz equation with high wave number}
\author{
Yu Du\footnotemark[1]
\thanks{Beijing computational science research center, Beijing, 100193, China. {\tt duyu87@csrc.ac.cn, dynju@qq.com}.
This research work is supported by a Tianhe--2JK computing time award at the Beijing Computational Science Research Center (CSRC).
The research of this author was supported in part by the China Postdoctoral Science Foundation under grant 2016M591053
and the National Natural Science Foundation of China under grants 11601026}
\and
Zhimin Zhang\footnotemark[2]
\thanks{Beijing Computational Science Research Center, Beijing, 100193 and Department of Mathematics, Wayne State University, Detroit, MI 48202. {\tt zmzhang@csrc.ac.cn, zzhang@math.wayne.edu}. The research of this author was supported in part by the National Natural Science Foundation of China under grants 11471031, 91430216, U1530401, and the U.S. National Science Foundation through grant DMS--1419040.}
}
\begin{document}

\maketitle

\vspace{-1.4in}
\slugger{sinum}{200x}{xx}{x}{xxx--xxx}
\vspace{1.4in}

\setcounter{page}{1}

\begin{abstract} We study superconvergence property
of the linear discontinuous Galerkin finite element method with the polynomial preserving recovery (PPR)
and Richardson extrapolation
for the two dimensional Helmholtz equation. The error estimate
with explicit dependence on the wave number $k$, the penalty parameter $\mu$ and the mesh condition parameter $\al$ is derived.
 First, we prove that under the assumption $k(kh)^2\leq C_0$ ($h$ is the mesh size) and certain mesh condition,
 the estimate between the finite element solution and the linear interpolation of
the exact solution is superconvergent under the $\norme{\cdot}$-seminorm.
Second, we prove a superconvergence result for the recovered gradient by PPR.
Furthermore, we estimate the error between the finite element gradient and recovered gradient, which motivate us to define the a posteriori error estimator.
Finally, Some numerical examples are provided to confirm the theoretical results of superconvergence analysis.
All theoretical findings are verified by numerical tests.
\end{abstract}

\begin{keywords}
Helmholtz equation, large wave number, pollution errors,
superconvergence, polynomial preserving recovery, discontinuous Galerkin finite element methods
\end{keywords}

\begin{AMS}
65N12, 
65N15, 
65N30, 
78A40  
\end{AMS}

\setcounter{page}{1}

\section{Introduction}\label{intro}
This paper is devoted to superconvergence analysis of the linear Galerkin finite element method for the following Helmholtz problem:
\eq{
-\De u - k^2 u &=f  \qquad\mbox{in  } \Om,\label{eq1.1a}\\
\frac{\pa u}{\pa n} +\i k u &=g \qquad\mbox{on } \Ga,\label{eq1.1b}
}
where $\Om\in\R^2$ is a bounded polygon with boundary $\Ga:=\pa\Om$, $\i=\sqrt{-1}$ denotes the imaginary unit and $n$ denotes the unit outward normal to $\Ga$.
The above Helmholtz problem is an approximation of the acoustic scattering problem (with time dependence $e^{\i\om t}$) and $k$ is known as the wave number. The Robin boundary condition
\eqref{eq1.1b} is known as the first order approximation of the radiation condition (cf. \cite{em79}).
The Helmholtz problem \eqref{eq1.1a}--\eqref{eq1.1b} also arises in applications as a consequence of frequency domain
treatment of attenuated scalar waves (cf. \cite{dss94}).

It is well known that the finite element method of fixed order for the Helmholtz
problem \eqref{eq1.1a}--\eqref{eq1.1b} at high frequencies ($k\gg1$) is subject to the effect of pollution:
the ratio of the error of the finite element solution to the error of the best approximation
from the finite element space cannot be uniformly bounded with respect to $k$
\cite{Ainsworth04,bs00,bips95,dbb99,harari97,ib95a,ib97}.
More precisely, the linear finite element method for a $2$-D Helmholtz problem
satisfies the following error estimate under the mesh constraint $k(kh)^2\leq C_0$ \cite{zw,dw}:
\eq{ \norm{\na(u-u_h)}_{L^2(\Om)} \leq C_1kh+C_2k(kh)^2. \label{int-eq1}}
Here $u_h$ is the linear finite element solution, $h$ is the mesh size and $C_i,i=1,2$ are positive constants independent of $k$ and $h$.
It is easy to see that the order of the first term on the right hand side of \eqref{int-eq1} is the same to that
of the interpolation error in $H^1$-seminorm and it can dominate the error bound only if $k(kh)$ is small
enough. However, the second term on the {right-hand side} of \eqref{int-eq1} dominates the estimate under other mesh conditions.
For example, $kh$ is fixed and $k$ is large enough. The term $C_2k(kh)^2$ is called the pollution error of
the finite element solution.

Considerable efforts have been made in analysis of different numerical methods
for the Helmholtz problem with large wave number in the literature.
The readers are referred to \cite{ak79,dss94,sch74} for asymptotic error estimates of general DG methods and
\cite{ib95a,ib97} for pre-asymptotic error estimates of a one-dimensional problem discretized on equidistant grid.
For more pre-asymptotic error estimates, Please refer to \cite{ms10,ms11}
and \cite{zbw,zw} for classical finite element methods as well as interior penalty finite element methods.
For other methods solving the Helmholtz problems, such as the interior
penalty discontinuous Galerkin method or the source transfer domain decomposition method,
one can read \cite{mps13,fw09,fw11,zd,dzh,cx}.

In this work, we investigate the superconvergence property of the linear discontinuous Galerkin (DG) finite element method
when being post-processed by our polynomial preserving recovery (PPR) for the Helmholtz problem.
PPR was proposed by Zhang and Naga \cite{zn05} in 2004 and has been successfully applied to finite element methods.
COMSOL Multiphysics adopted PPR as a post-processing tool since 2008 \cite{comsol}.
\cite{dwz} has applied the technique to the Helmholtz problem and prove its superconvergence property.
In this paper, we generalize the technique over the DG finite element space and prove its superconvergence property.
To learn more about PPR, readers are referred to \cite{z04,z04t,nz04,wz07}.
Some theoretical results about recovery techniques and recovery-type error
estimators can be found in \cite{bx03,lmw,zl99,xz03,yz01}.


Our purpose of this paper is to prove the superconvergence error estimates for the linear discontinuous Galerkin finite element method and analyze the influence of the PPR technique on the pollution error. Note the superconvergence error estimates depend on the triangulation, the penalty parameter and the wave number under certain mesh condition. In order to prove the estimates, we first assume some mesh constraints, called ``Condition $\al$'', and then redefine the PPR method on the discontinuous Galerkin finite element space. Finally, all the estimates motivate us to combine the PPR technique and the Richardson extrapolation to reduce the error further and define the a posterior error estimator.

The remainder of this paper is organized as follows: some notations, DG-FEM and the mesh constraints are
introduced in section \ref{pre}. In section \ref{sup}, we prove the superconvergence between the interpolant and
the finite element solution to the problem with Robin boundary \eqref{eq1.1a}--\eqref{eq1.1b}. In section \ref{sec_ppr},
we redefine the PPR technique over the linear DG finite element space and  prove the superconvergence property of $G_h$ in the Sobolev space $H^3$ and show the most important result,
that is the error estimate of $G_hu_h$. Then we try to give the reason for the effect of $G_h$ to the pollution error in
section \ref{sec_ppr_nabla}. Finally, we simulate a model problem by the linear DG-FEM, PPR method and the Richardson extrapolation in section \ref{sec_num}.
It is shown that the recovered gradient can be improved by the Richardson extrapolation further and the a posterior error estimator based on the PPR and Richardson extrapolation is exact asymptotically.

Throughout the paper, we assume that $\Om$ is a strictly star-shaped domain.
Here ``strictly star-shaped'' means that there exist a point $x_\Om\in\Om$ and a positive
constant $c_\Om$ depending only on $\Om$ such that
\eqn{ (x-x_\Om)\cdot n\geq c_\Om\quad \forall x\in\Ga. }
\section{Preliminaries} \label{pre}
Throughout this paper, we assume that for any node point $z\in\N_h$, there exists at least one interior edge $e\in\E_h^I$ having $z$.

To introduce the method and simplify the analysis, we introduce some notation first. The standard Sobolev and Hilbert space, norm, and inner product notation
are adopted (cf. \cite{bs08,ciarlet78}). In particular, $(\cdot,\cdot)_Q$ and $\pd{\cdot,\cdot}_\Sigma$
for $\Sigma=\pa Q$ denote the $L^2$-inner product on complex-valued $L^2(Q)$ and $L^2(\Sigma)$ spaces, respectively.

Let $\T_h$ be a regular triangulation of the domain $\Om$. For any $\tau\in\T_h$, we denote by $h_\tau$ its diameter and by $\abs{\tau}$ its area. Let $h=\max_{\tau\in\T_h}h_\tau$. Assume that $h_\tau\eqsim h$.

Let $V_h$ be the approximation space of piecewise linear polynomials, that is,
\eqn{ V_h:=\set{v_h\in L^2(\Om): v_h|_\tau\in P_1(\tau)\quad \forall \tau\in\T_h}, }
where $P_1(\tau)$ denotes the set of all polynomials on $\tau$ with degree $\leq1$.

$\E_h$ be the set of all edges of $\T_h$ and $\N_h$ be the set of all nodal points. Denote all the boundary edges by $\E_h^B:=\set{e\in\E_h:e\subset\Ga}$ and the interior edges by $\E_h^I:=\E_h\backslash\E_h^B$.
For each edge $e\in\E_h$, define $h_e:={\rm diam}(e)$. For $e=\pa \tau\cap\tau'\in\E_h^I$, let $\bn_e$ be a unit normal
vector to $e$. We assume that the normal vector $\bn_e$ is oriented from $\tau'$ to $\tau$ and define
\eqn{ [v]|_e:=v|_{\tau'}-v|_\tau,\ \set{v}:=\frac{1}{2}(v|_{\tau'}+v|_\tau). }

We define the space $E:=\prod_{\tau\in\T_h}H^1(\tau)$ and introduce the sesquilinear form $a_h(\cdot,\cdot)$ on $E\times E$ as follows:
\eqn{ a_h(u,v) &:= \sum_{K\in\T_h} (\na u,\na v)_K - \sum_{e\in\E_h^I} \left( \pd{\set{\frac{\pa u}{\pa \bn_e}},\jm{v}}_e + \pd{\jm{u},\set{\frac{\pa v}{\pa\bn_e}}}_e \right) \\
 &\qquad + J_0(u,v) \\
J_0(u,v) &:= \sum_{e\in\E_h^I} \frac{\rho_{0,e}}{h_e^{1+\mu}} \pd{\jm{u},\jm{v}}_e, \\
 }
The linear DG method is defined as follows: find $u_h\in V_h$ such that
\eq{ a_h(u_h,v_h) - k^2(u_h,v_h) + \i k\pd{u_h,v_h} = (f,v_h) + \pd{g,v_h}\quad \forall v_h\in V_h. \label{eq_DG-FEM}}

\emph{Remark 2.1}
(a) Note that the method is the standard symmetric DG method (cf. \cite{br}). So we have the proposition (cf. proposition 2.9 in \cite{br}):
$u\in H^2(\Om)$ is the solution to \eqref{eq1.1a}--\eqref{eq1.1b} if and only if $u$ satisfies the general DG variational formulation
\eqn{ a_h(u,v) - k^2(u,v) + \i k\pd{u,v} = (f,v) + \pd{g,v}\quad \forall v\in E. }

(b) A similar DG method, called interior penalty discontinuous Galerkin method (IPDG), was introduced and analyzed by Feng, Wu and so on for the Helmholtz problem \eqref{eq1.1a}--\eqref{eq1.1b}. The reader is referred to \cite{fw09,fw11,zd,dzh} for both asymptotic and preasymptotic error estimates.

The following two norms will be used in the forthcoming sections :
\eqn{ \norm{v}_{1,h} &:= \left( \sum_{K\in\T_h}\norm{v}_{L^2(K)}^2 + J_0(v,v) \right)^{1/2}\quad \forall v\in E,\\
\norme{v}_{1,h} &:= \left( \norm{v}_{1,h}^2 + k^2\norm{v}_0^2 \right)^{1/2}\quad \forall v\in E. }
We denote by $\abs{\cdot}_{\cH^1(\omega)}$ the broken semi-$H^1$ norm $\left(\sum_{\tau\in\omega}\abs{\cdot}_{H^1(\tau)}^2\right)^{1/2}$ where $\omega\subset\T_h$.

Throughout the paper�� $C$ denotes a generic positive constant
which is independent of $h,k,f,g$ and the penalty parameters. We use the shorthand notation $A\ls B$ for
$A\leq CB$ and assume $k\gg1$ since we are considering high-frequency problems.
We assume that the solution $u$ to the problem \eqref{eq1.1a}--\eqref{eq1.1b} is $H^3$-regular over $\Om$ and the data $g$
is $H^2$-regular over $\Ga$. Denote by
\eq{C_{u,g}=\sum_{j=1}^3 k^{-(j-1)}\norm{u}_j + \sum_{j=1}^2 k^{-j}\abs{g}_{H^j(\Ga)}.}

The function $C_{u,g}$ could be treated as a constant in this paper since $\norm{u}_j$
is bounded by $\max(k^0,k^{j-1})$ when $u$ is the solution to the Helmholtz problem \eqref{eq1.1a}--\eqref{eq1.1b}. The reader is referred to \cite{mps13,ms10,ms11} for the estimates of $u$.

Before estimating the errors, we state the coercivity and continuity properties for the sesquilinear form $a_h(\cdot,\cdot)$. Since they easily follow from the difinitions 2.10--2.11 in \cite{br}, the details are omitted.
\begin{lemma}\label{lemma_norms_equal}
For any $v,w\in E$, the sesquilinear form $a_h(\cdot,\cdot)$ satisfies
\eqn{ \abs{a_h(v,w)},\abs{a_h(w,v)} \ls \norm{w}_{1,h}\norm{v}_{1,h}. }
In addition, there exists constant $\underline{\rho}$ such that if $\underline{\rho}\leq\rho_{0,e}$,
\eqn{ &\norm{v_h}_{1,h}^2 \ls a_h(v_h,v_h)\qquad \forall v_h\in V_h,\\
& \norme{v_h}_{1,h}^2 \ls a_h(v_h,v_h) + k^2(v_h,v_h)\qquad \forall v_h\in V_h. }
\end{lemma}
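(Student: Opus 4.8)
The plan is to prove all three bounds by estimating the four building blocks of $a_h(\cdot,\cdot)$ — the elementwise term $\sum_K(\na v,\na w)_K$, the two consistency (flux) terms, and the penalty term $J_0$ — separately, and then combining them using the Cauchy--Schwarz inequality, a trace inequality, and Young's inequality. The first (continuity) bound needs only Cauchy--Schwarz plus a trace estimate, while the coercivity bounds additionally require the discrete inverse trace inequality on $V_h$ together with the penalty threshold $\underline{\rho}$.

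For the continuity estimate I would split $a_h(v,w)$ into its constituent sums. The volume term is bounded by $\abs{v}_{\cH^1(\T_h)}\abs{w}_{\cH^1(\T_h)}\le\norm{v}_{1,h}\norm{w}_{1,h}$ via Cauchy--Schwarz in $L^2(K)$ followed by the discrete Cauchy--Schwarz over $K\in\T_h$. The penalty term, viewed as a weighted inner product, obeys $\abs{J_0(v,w)}\le J_0(v,v)^{1/2}J_0(w,w)^{1/2}\le\norm{v}_{1,h}\norm{w}_{1,h}$. The two flux terms are the delicate ones: on each $e\in\E_h^I$ I apply Cauchy--Schwarz in $L^2(e)$, inserting the weight $h_e^{(1+\mu)/2}$ so that the jump factor $\|\jm{w}\|_{L^2(e)}$ is paired with the correct $J_0$-weight, and then control the flux factor $\|\av{\pa v/\pa\bn_e}\|_{L^2(e)}$ by the elementwise gradient through a trace inequality carrying the matching $h_e$-power. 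A final Cauchy--Schwarz in the edge index reconstitutes $\norm{v}_{1,h}\norm{w}_{1,h}$; the estimate for $a_h(w,v)$ is identical.

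For coercivity, take $w=v_h\in V_h$ and pass to the real part. Since the volume and penalty terms are real and the two flux terms are complex conjugates, $\re\,a_h(v_h,v_h)=\abs{v_h}^2_{\cH^1(\T_h)}-2\,\re\sum_{e\in\E_h^I}\pdaj{\pa v_h/\pa\bn_e}{v_h}+J_0(v_h,v_h)$, and the whole task is to absorb the cross term. I would apply Young's inequality on each edge, choosing the split so that the jump contribution is $\delta\,J_0(v_h,v_h)$ and the flux contribution is $\delta^{-1}\sum_{e}(h_e^{1+\mu}/\rho_{0,e})\|\av{\pa v_h/\pa\bn_e}\|^2_{L^2(e)}$. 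On $V_h$ the inverse trace inequality gives $\|\av{\pa v_h/\pa\bn_e}\|^2_{L^2(e)}\ls h_e^{-1}\abs{v_h}^2_{\cH^1(\omega_e)}$, where $\omega_e$ is the patch of elements containing $e$; the surviving factor $h_e^{\mu}$ is bounded for $h$ small, and using $\rho_{0,e}\ge\underline{\rho}$ together with the finite overlap of the patches $\omega_e$ yields a flux contribution $\ls\delta^{-1}\underline{\rho}^{-1}\abs{v_h}^2_{\cH^1(\T_h)}$. Hence $\re\,a_h(v_h,v_h)\ge(1-C\delta^{-1}\underline{\rho}^{-1})\abs{v_h}^2_{\cH^1(\T_h)}+(1-\delta)J_0(v_h,v_h)$; fixing $\delta<1$ and then taking $\underline{\rho}$ large enough (independent of $h$ and $k$) makes both coefficients strictly positive, so $\norm{v_h}^2_{1,h}\ls a_h(v_h,v_h)$. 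The energy-norm bound then follows at once: adding $k^2\norm{v_h}_0^2$ to both sides and using $\norme{v_h}_{1,h}^2=\norm{v_h}_{1,h}^2+k^2\norm{v_h}_0^2$ gives the third inequality.

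The main obstacle is the coercivity cross term, specifically calibrating $\underline{\rho}$ so that the flux contribution is swallowed by the gradient and penalty terms uniformly in $h$ and $k$. This rests entirely on the discrete inverse trace inequality on $V_h$ and on careful bookkeeping of the $h_e$-scaling: the $h_e^{1+\mu}$ in the penalty weight against the $h_e^{-1}$ produced by the inverse inequality combine into the harmless factor $h_e^{\mu}$. By contrast, the continuity flux bound needs only a continuous trace inequality and no lower bound on $\rho_{0,e}$, which is why the threshold condition appears only in the coercivity statements.
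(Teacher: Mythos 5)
The paper itself gives no proof of this lemma --- it simply points to the definitions of continuity and coercivity in \cite{br} and omits the details --- so your proposal is a reconstruction rather than a comparison. Your coercivity argument is the standard one and is sound: $a_h(v_h,v_h)$ is real because the two flux terms are complex conjugates, the cross term is absorbed by Young's inequality with the weight $h_e^{(1+\mu)/2}\rho_{0,e}^{-1/2}$, the inverse trace inequality $\norm{\av{\pa v_h/\pa\bn_e}}_{L^2(e)}^2\ls h_e^{-1}\abs{v_h}_{\cH^1(\omega_e)}^2$ (valid because $v_h$ is piecewise polynomial) turns the flux contribution into $\delta^{-1}\underline{\rho}^{-1}h_e^{\mu}\abs{v_h}^2_{\cH^1}$, and taking $\underline{\rho}$ large enough closes the estimate; the $\norme{\cdot}_{1,h}$ bound then follows by adding $k^2\norm{v_h}_0^2$ to both sides. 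That part matches what the cited reference would deliver.

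The gap is in the continuity bound, which the lemma asserts for all $v,w\in E=\prod_{\tau}H^1(\tau)$. The step where you ``control the flux factor $\norm{\av{\pa v/\pa\bn_e}}_{L^2(e)}$ by the elementwise gradient through a trace inequality'' does not go through on $E$: for $v$ merely in $H^1(\tau)$ the normal trace of $\na v$ on $e$ is not an $L^2(e)$ function, and even for $v\in H^2(\tau)$ the scaled trace inequality yields $h_e^{1/2}\norm{\av{\pa v/\pa\bn_e}}_{L^2(e)}\ls \abs{v}_{H^1(\tau)}+h_\tau\abs{v}_{H^2(\tau)}$, whose second term is not controlled by $\norm{v}_{1,h}$. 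The inequality you need is the \emph{inverse} trace inequality, which holds only when the relevant argument lies in $V_h$. This is precisely why the standard treatment either (i) proves boundedness in an augmented norm containing $\sum_{e}h_e\norm{\av{\na v}}^2_{L^2(e)}$ or $\sum_{K}h_K^2\abs{v}_{H^2(K)}^2$, (ii) restricts one argument to $V_h$, or (iii) rewrites the flux terms with lifting operators $r_e:\,L^2(e)\to V_h\times V_h$, after which boundedness in $\norm{\cdot}_{1,h}$ on $E\times E$ does hold --- the last being the route implicit in the reference the paper cites. You should state which of these devices you are using; as written, the continuity half of your argument proves the bound only for $v,w\in V_h$. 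A smaller point: with your pairing of weights the continuity constant carries a factor $\rho_{0,e}^{-1/2}h_e^{\mu/2}$, so it is not independent of the penalty parameters (as the paper's convention requires) unless $\rho_{0,e}$ is bounded below; your closing remark that continuity needs no lower bound on $\rho_{0,e}$ is therefore not quite consistent with your own estimate.
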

The following lemma shows the preasymptotic error estimates for the solution $u$ to \eqref{eq1.1a}--\eqref{eq1.1b}.
The results can be derived by arguments same to those in \cite{dzh,zd} and we omit the details to save space.
\begin{lemma}\label{lemma_pre_err}
Assume that $u$ is the solution the problem \eqref{eq1.1a}--\eqref{eq1.1b} and $u_h$ is the discrete solution of the scheme \eqref{eq_DG-FEM}. Then there exists constant $C_0$ independent of $k$ and $h$, such that if $k(kh)^2\le C_0$ then the following estimates hold:
\begin{align}
\norm{u-u_h}_{1,h}&\ls \big(kh+k(kh)^{2}\big)C_{u,g},\label{ecor-1-a}\\
k\|u-u_h\|_0&\ls \big((kh)^{2}+k(kh)^{2}\big)C_{u,g}.\label{ecor-1-b}
\end{align}
\end{lemma}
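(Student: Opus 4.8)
The plan is to adapt the pre-asymptotic duality technique of \cite{dzh,zd} to the present symmetric DG form. Throughout write $b_h(v,w) := a_h(v,w) - k^2(v,w) + \i k\pd{v,w}$ for the full sesquilinear form, so that by Remark 2.1(a) the exact solution $u$ satisfies $b_h(u,v_h) = (f,v_h) + \pd{g,v_h}$ for every $v_h\in V_h$, exactly as $u_h$ does; subtracting gives the Galerkin orthogonality $b_h(u-u_h,v_h)=0$ for all $v_h\in V_h$. Set $e:=u-u_h$ and split it through the piecewise-linear interpolant $u_I:=I_h u$ as $e=\eta+\xi$ with $\eta:=u-u_I$ and $\xi:=u_I-u_h\in V_h$. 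The standard interpolation estimates $\norm{\eta}_{1,h}\ls h\abs{u}_{H^2}$ and $\norm{\eta}_0\ls h^2\abs{u}_{H^2}$, together with $\abs{u}_{H^2}\ls kC_{u,g}$, give $\norme{\eta}_{1,h}\ls (kh+(kh)^2)C_{u,g}$, so the whole difficulty is concentrated in $\xi$ and in the $L^2$-norm of $e$.

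First I would record the G\aa rding-type inequality that follows from the coercivity in Lemma \ref{lemma_norms_equal}: for $v_h\in V_h$ one has $\norme{v_h}_{1,h}^2 \ls \re\,b_h(v_h,v_h) + 2k^2\norm{v_h}_0^2$. Applying this to $\xi$, invoking orthogonality (so that $b_h(\xi,\xi)=b_h(e,\xi)-b_h(\eta,\xi)=-b_h(\eta,\xi)$) and the continuity of $a_h$ from Lemma \ref{lemma_norms_equal} — plus the trivial bounds on the $k^2(\cdot,\cdot)$ and $\i k\pd{\cdot,\cdot}$ contributions — reduces the energy estimate to
\[ \norme{\xi}_{1,h}^2 \ls \norme{\eta}_{1,h}\,\norme{\xi}_{1,h} + k^2\norm{e}_0^2 + k^2\norm{\eta}_0^2. \]
Thus everything closes once $k\norm{e}_0$ is controlled by a small multiple of $\norme{e}_{1,h}$ plus a pollution term.

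The crux is therefore the duality (Aubin--Nitsche) estimate for the indefinite quantity $k^2\norm{e}_0^2$. I would introduce the adjoint Helmholtz problem with source $e$ — find $z$ with $b_h(v,z)=(v,e)$ for all $v\in E$ — and exploit the $k$-explicit stability and regularity bounds valid on the star-shaped domain $\Om$, namely $\norm{\na z}_0+k\norm{z}_0\ls\norm{e}_0$ and $\abs{z}_{H^2}\ls k\norm{e}_0$. By orthogonality $\norm{e}_0^2=b_h(e,z)=b_h(e,z-z_I)$, where $z_I\in V_h$ is the interpolant of $z$. A naive bound of the three pieces of $b_h$ against the interpolation error of $z$ loses a power of $k$ (because $\abs{z}_{H^2}\sim k\norm{e}_0$), so instead I would split $z=z_1+z_2$ into an elliptic-regular part $z_1$ solving a Poisson-type problem with $\norm{z_1}_{H^2}\ls\norm{e}_0$ ($k$-free) and a remainder $z_2$ solving a Helmholtz problem with the smoother source $k^2z_1$. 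Pairing $z_1$ against $a_h$ then yields the genuine $h$-gain, while $z_2$ is handled through its own energy estimate; this is precisely the modified duality argument of \cite{dzh,zd} and is the main obstacle of the proof. The outcome is an estimate of the form
\[ k\norm{e}_0 \ls (kh)\,\norme{e}_{1,h} + k(kh)^2\,C_{u,g}, \]
where the additive pollution term arises from processing the interpolation error of $u$ through the $k$-heavy $H^2$-bound of the dual solution, and where the mesh condition $k(kh)^2\le C_0$ is used to absorb a term $k(kh)^2\norm{e}_0^2$ into the left-hand side.

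It remains to assemble the pieces. Since $\norme{e}_{1,h}^2\ls\norme{\eta}_{1,h}^2+\norme{\xi}_{1,h}^2$, the duality estimate gives $k^2\norm{e}_0^2\ls (kh)^2\big(\norme{\eta}_{1,h}^2+\norme{\xi}_{1,h}^2\big)+\big(k(kh)^2C_{u,g}\big)^2$; feeding this into the energy inequality for $\xi$ and absorbing the term $(kh)^2\norme{\xi}_{1,h}^2$ — legitimate because $(kh)^2\le C_0/k$ is small under the mesh condition — leaves $\norme{\xi}_{1,h}\ls\norme{\eta}_{1,h}+k(kh)^2C_{u,g}$. Combining with the interpolation bound for $\eta$ yields $\norm{u-u_h}_{1,h}\le\norme{u-u_h}_{1,h}\ls (kh+k(kh)^2)C_{u,g}$, which is \eqref{ecor-1-a}. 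Substituting this bound back into the duality estimate and using $k(kh)^3\ls k(kh)^2$ (valid since $kh\ls1$ under the mesh condition) gives $k\norm{u-u_h}_0\ls\big((kh)^2+k(kh)^2\big)C_{u,g}$, which is \eqref{ecor-1-b}.
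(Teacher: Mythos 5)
The paper does not actually prove this lemma: it only remarks that the estimates ``can be derived by arguments same to those in \cite{dzh,zd}'' and omits all details. Your proposal reconstructs precisely that argument --- Galerkin orthogonality from the consistency of the symmetric DG form, a G\aa rding-type energy inequality from Lemma~\ref{lemma_norms_equal}, and the refined Aubin--Nitsche duality with a splitting of the adjoint solution to avoid the extra power of $k$ coming from $\abs{z}_{H^2}\ls k\norm{e}_0$, closed by absorption under $k(kh)^2\le C_0$ --- so it follows the same route the paper intends, and both the intermediate bound $k\norm{e}_0\ls (kh)\norme{e}_{1,h}+k(kh)^2C_{u,g}$ and the final assembly are consistent with \eqref{ecor-1-a}--\eqref{ecor-1-b}.
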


We begin with some definitions regarding meshes. 
For an interior edge $e\in\E_h^I$, we denote $\Om_e=\tau_e\cup\tau_e'$,
a patch formed by the two elements $\tau_e$ and $\tau_e'$ sharing $e$, see Figures~\ref{figNot}-\ref{figNotb}.
For any edge $e\in\E_h$ and an element $\tau$ {with} $e\subset\tau$,
$\ta_e$ denotes the angle opposite of the edge $e$ in $\tau${,
$\bt_e$ denotes the unit tangent vector of $e$ with counterclockwise orientation and $\bn_e$, the unit outward normal vector of $e$,
$h_e, h_{e+1}$, and $h_{e-1}$ denote the lengths of the three edges of $\tau$, respectively.
Here the subscript $e+1$ or $e-1$ is for orientation.
Note that all triangles in the triangulation are orientated counterclockwise,
and the} index $'$ is added for the corresponding quantities in $\tau'$ with $\bt_e=-\bt_e'$ and $\bn_e=-\bn_e'$
due to the orientation.

For any $e\in\E_h^I$ (cf. Figure~\ref{figNot}), we say that $\Om_e$ is an $\ep$ approximate parallelogram if the
lengths of any two opposite edges differ by at most $\ep$, that is,
\eqn{ \abs{h_{e-1}-h_{e-1}'} + \abs{h_{e+1}-h_{e+1}'}\le\ep. }

For any $e\in\E_h^B$ (cf. Figure~\ref{figNotb}), we say that $\tau_e$ is an $\ep$ approximate isosceles triangle if the lengths of
its two edges $e-1$ and $e+1$ differ by at most $\ep$, that is,
\eqn{ \abs{h_{e+1}-h_{e-1}}\le\ep. }

\begin{definition} \label{meshcond}
The triangulation $\T_h$ is said to satisfy \emph{mesh condition $\al$} if there exists a constant $\al\geq0$
such that
\begin{itemize}
  \item[(a)] the patch $\Om_e$ is an $O(h^{1+\al})$ approximate parallelogram for any interior edge $e\in\E_h^I$;
  \item[(b)] the triangle $\tau_e$ is an $O(h^{1+\al})$ approximate isosceles triangle for any boundary edge $e\in\E_h^B$;
\end{itemize}
\end{definition}

\emph{Remark} 2.1.
The restriction $(\al)$ in Definition~\ref{meshcond} is often used
to prove the superconvergence property for problems with the Dirichlet boundary condition \cite{cx07,wz07}.
Note that this restriction is technique and just for theoretical
purpose. In fact, one superconvergence results still can be obtained under general meshes which do not satisfy the condition, such as Delaunay triangulation and Chevron pattern triangulation.

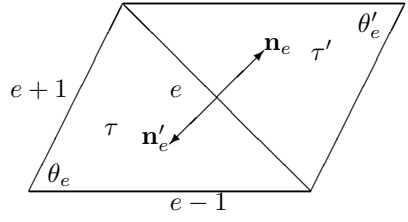
\begin{figure}
\begin{center}
\setlength{\unitlength}{0.25mm}
  \begin{picture}(390,110)


  \linethickness{0.25mm}
  \put(110,10){\line(1,0){150}}
  \put(160,110){\line(1,0){150}}
  \put(110,10){\line(1,2){50}}
  \put(260,10){\line(1,2){50}}
  \put(260,10){\line(-1,1){100}}

  \put(185,60){$e$}
  \put(150,40){$\tau$}
  \put(260,80){$\tau'$}
  \put(100,60){$e+1$}
  \put(185,0){$e-1$}
  \put(120,15){$\ta_e$}
  \put(285,95){$\ta_e'$}

  \put(210,60){\vector(-1,-1){25}}
  \put(210,60){\vector(1,1){25}}
  \put(170,35){$\bn_e'$}
  \put(235,85){$\bn_e$}
  \end{picture}
\end{center}
\caption{Notation in the patch $\Om_e$.}
\label{figNot}
\end{figure}

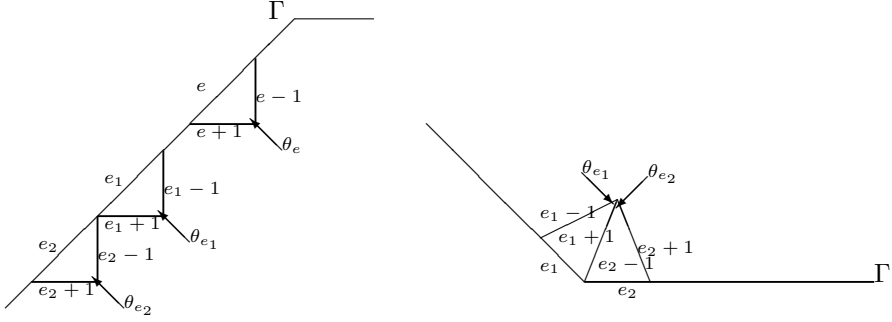
\begin{figure}
\begin{center}
\setlength{\unitlength}{0.35mm}
  \begin{picture}(390,110)

  \put(120,110){\line(1,0){30}}
  \put(10,0){\line(1,1){110}}
  \put(110,110){$\Ga$}

  \linethickness{0.25mm}
  \put(20,10){\line(1,0){25}}
  \put(45,10){\line(0,1){25}}
  \put(22.5,22.5){\scriptsize $e_2$}
  \put(45,18.5){\scriptsize $e_2-1$}
  \put(22.5,5){\scriptsize $e_2+1$}
  \put(55,0){\vector(-1,1){12}}
  \put(55,0){\scriptsize $\ta_{e_2}$}


  \put(45,35){\line(1,0){25}}
  \put(70,35){\line(0,1){25}}
  \put(47.5,47.5){\scriptsize $e_1$}
  \put(70,43.5){\scriptsize $e_1-1$}
  \put(47.5,30){\scriptsize $e_1+1$}
  \put(80,25){\vector(-1,1){12}}
  \put(80,25){\scriptsize $\ta_{e_1}$}

  \put(80,70){\line(1,0){25}}
  \put(105,70){\line(0,1){25}}
  \put(82.5,82.5){\scriptsize $e$}
  \put(105,78.5){\scriptsize $e-1$}
  \put(82.5,65){\scriptsize $e+1$}
  \put(115,60){\vector(-1,1){12}}
  \put(115,60){\scriptsize $\ta_{e}$}

  \put(230,10){\line(1,0){110}}
  \put(230,10){\line(-1,1){60}}
  \put(340,10){$\Ga$}

  \put(230,10){\line(2,5){12.5}}
  \put(255,10){\line(-2,5){12.5}}
  \put(242.5,5){\scriptsize $e_2$}
  \put(250,21){\scriptsize $e_2+1$}
  \put(235,15){\scriptsize $e_2-1$}
  \put(254,50){\vector(-1,-1){12}}
  \put(254,50){\scriptsize $\ta_{e_2}$}

  \put(213.3,26.7){\line(2,1){29}}
  \put(213,14.3){\scriptsize $e_1$}
  \put(213,34.3){\scriptsize $e_1-1$}
  \put(220,25){\scriptsize $e_1+1$}
  \put(229,51.5){\vector(1,-1){12}}
  \put(229,51.5){\scriptsize $\ta_{e_1}$}

  \end{picture}
\end{center}
\caption{Notation in the boundary elements.}
\label{figNotb}
\end{figure}

\section{ Superconvergence between the discontinuous finite element solution and linear interpolant} \label{sup}
First we introduce a quadratic interpolant { $\psi_Q=\Pi_Q\psi$ of $\psi$ based on nodal values and}
moment conditions on edges,
\eq{ (\Pi_Q\phi)(z)=\phi(z),\quad \int_e\Pi_Q\phi=\int_e\phi\quad \forall z\in\N_h,e\in\E_h. }

The following fundamental identity for $v_h\in P_1(\tau)$ has been proved in \cite{cx07}:
\eq{ \int_\tau \na(\phi-\phi_I)\cdot\na v_h = \sum_{e\in\pa\tau} \bigg( \be_e\int_e\frac{\pa^2\phi_Q}{\pa \bt_e^2}\frac{\pa v_h}{\pa \bt_e} + \ga_e\int_e\frac{\pa^2\phi_Q}{\pa \bt_e\pa \bn_e}\frac{\pa v_h}{\pa \bt_e} \bigg) \label{fdmtliequ} }
where
\eq{ \be_e=\frac{1}{12}\cot\ta_e(h_{e+1}^2-h_{e-1}^2),\quad \ga_e=\frac{1}{3}\cot\ta_e\abs{\tau}, }
and $\phi_I\in P_1(\tau)$ is the linear interpolant of $\phi$ on $\tau$. The following lemma can be easily
obtained \cite{cx07,wz07}.
\begin{lemma} \label{parsestm}
Let $\bm_e$ denote $\bt_e$ or $\bn_e$. Assume that $\T_h$ satisfies { the} \emph{mesh condition $\al$}, then
we have the following estimates:
\begin{itemize}
  \item[(a)] For any interior edge $e\in\E_h^I$,
  \eq{ &\abs{\be_e}+\abs{\be_e'}\ls h^2,\quad \abs{\ga_e}+\abs{\ga_e'}\ls h^{2};\label{beta_gamma1}\\
  &\abs{\be_e-\be_e'}\ls h^{2+\al},\quad \abs{\ga_e-\ga_e'}\ls h^{2+\al}.\label{beta_gamma2}}
  \item[(b)] For two adjacent edges $e_1,e_2\in\E_h^B$, that is $e_1\cap e_2\neq\emptyset$,
  \eq{ &\abs{\be_{e_1}}+\abs{\be_{e_2}}\ls h^{2+\al},\quad \abs{\ga_{e_1}}+\abs{\ga_{e_2}}\ls h^{2}�� \label{beta_gamma3}\\
  & \abs{\ga_{e_1}-\ga_{e_2}}\ls h^{2+\al}\label{beta_gamma4}}
  \item[(c)] For any edge $e\in\E_h$, $e\subset\pa\tau_e$,
  \eq{  &\int_e \frac{\pa^2\phi}{\pa \bt_e\pa \bm_e}\frac{\pa v_h}{\pa \bt_e} \ls (\norm{\phi}_{H^{3}(\tau_e)}+h^{-1}\norm{\phi}_{H^{2}(\tau_e)})\abs{v_h}_{\cH^1(\tau_e)};\label{trcinq1}\\
  &\int_e \frac{\pa^2(\phi-\phi_Q)}{\pa \bt_e\pa \bm_e}\frac{\pa v_h}{\pa \bt_e} \ls \abs{\phi}_{H^{3}(\tau_e)}\abs{v_h}_{\cH^1(\tau_e).\label{trcinq2}}
}
\end{itemize}
\end{lemma}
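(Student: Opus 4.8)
The plan is to reduce $\be_e$ and $\ga_e$ to expressions in the edge lengths alone, read off the crude $O(h^2)$ bounds from shape regularity, and then extract the superconvergent $O(h^{2+\al})$ differences from \emph{mesh condition $\al$}. Combining the law of cosines with the area identity $\abs{\tau}=\tfrac12 h_{e+1}h_{e-1}\sin\ta_e$ gives $\cot\ta_e=(h_{e+1}^2+h_{e-1}^2-h_e^2)/(4\abs{\tau})$, whence
\eqn{ \ga_e=\tfrac{1}{12}\big(h_{e+1}^2+h_{e-1}^2-h_e^2\big),\qquad \be_e=\frac{(h_{e+1}^2+h_{e-1}^2-h_e^2)(h_{e+1}^2-h_{e-1}^2)}{48\abs{\tau}}. }
Since $\T_h$ is regular we have $h_{e\pm1}\eqsim h$, $\abs{\tau}\eqsim h^2$ and $\cot\ta_e=O(1)$; together with the factorization $h_{e+1}^2-h_{e-1}^2=(h_{e+1}-h_{e-1})(h_{e+1}+h_{e-1})$ this yields at once the bounds \eqref{beta_gamma1}, and the first estimate in \eqref{beta_gamma3} once the isosceles condition $\abs{h_{e+1}-h_{e-1}}\ls h^{1+\al}$ is invoked on the boundary triangle.

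For the superconvergent differences \eqref{beta_gamma2} I would proceed by ``add and subtract''. Writing $a=h_{e+1},b=h_{e-1}$ and noting that the shared edge forces $h_e=h_e'$ while the parallelogram condition gives $\abs{a-a'}+\abs{b-b'}\ls h^{1+\al}$, each difference of squares obeys $\abs{a^2-a'^2}+\abs{b^2-b'^2}\ls h\cdot h^{1+\al}=h^{2+\al}$, so $\abs{\ga_e-\ga_e'}\ls h^{2+\al}$ follows directly from the formula for $\ga_e$. The only genuinely new ingredient is the area difference: expanding the Heron polynomial $16\abs{\tau}^2$ in the squared edge lengths and subtracting the primed analogue, every surviving term carries a factor $a^2-a'^2$ or $b^2-b'^2$ (of size $h^{2+\al}$) against bounded $O(h^2)$ coefficients, so $\big|\abs{\tau}^2-\abs{\tau'}^2\big|\ls h^{4+\al}$ and hence $\big|\abs{\tau}-\abs{\tau'}\big|\ls h^{2+\al}$. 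I expect this cancellation to be the main obstacle, since all lower-order contributions must be seen to disappear. Once it is in hand, writing $\cot\ta_e-\cot\ta_e'=3(\ga_e\abs{\tau'}-\ga_e'\abs{\tau})/(\abs{\tau}\abs{\tau'})$ and splitting the numerator as $\ga_e(\abs{\tau'}-\abs{\tau})+\abs{\tau}(\ga_e-\ga_e')$ gives $\abs{\cot\ta_e-\cot\ta_e'}\ls h^{4+\al}/h^4=h^{\al}$; feeding this into $\be_e-\be_e'=\tfrac{1}{12}\cot\ta_e\big[(a^2-b^2)-(a'^2-b'^2)\big]+\tfrac{1}{12}(\cot\ta_e-\cot\ta_e')(a'^2-b'^2)$ bounds the first term by $h^{2+\al}$ and the second product by $h^{\al}\cdot h^2$, so $\abs{\be_e-\be_e'}\ls h^{2+\al}$. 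The boundary estimates \eqref{beta_gamma3}--\eqref{beta_gamma4} follow the same pattern, the isosceles hypothesis on each $\tau_{e_i}$ replacing the parallelogram hypothesis and the adjacency $e_1\cap e_2\neq\emptyset$ supplying the common edge length needed to compare $\ga_{e_1}$ and $\ga_{e_2}$.

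For the trace inequalities in part (c) I would scale to a reference element of unit size. Since $v_h\in P_1(\tau_e)$, the tangential derivative $\pa v_h/\pa\bt_e$ is constant with $\abs{\pa v_h/\pa\bt_e}\ls h^{-1}\abs{v_h}_{\cH^1(\tau_e)}$, so it factors out of the edge integral. For \eqref{trcinq1} I would bound $\int_e \pa^2\phi/(\pa\bt_e\pa\bm_e)$ by $h^{1/2}\norm{D^2\phi}_{L^2(e)}$ and then apply the trace inequality $\norm{D^2\phi}_{L^2(e)}\ls h^{-1/2}\norm{\phi}_{H^2(\tau_e)}+h^{1/2}\norm{\phi}_{H^3(\tau_e)}$; collecting the powers of $h$ reproduces the factor $\norm{\phi}_{H^3(\tau_e)}+h^{-1}\norm{\phi}_{H^2(\tau_e)}$. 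For \eqref{trcinq2} the extra gain comes from the exactness of $\Pi_Q$ on $P_2$: on the reference element the functional $\hat\phi\mapsto\int_{\hat e}D^2(\hat\phi-\hat\phi_Q)$ is bounded and annihilates $P_2$, so Bramble--Hilbert bounds it by $\abs{\hat\phi}_{H^3(\hat\tau)}$; scaling back converts this into $h\,\abs{\phi}_{H^3(\tau_e)}$, which after multiplication by the $h^{-1}$ from the tangential derivative yields the clean estimate $\abs{\phi}_{H^3(\tau_e)}\abs{v_h}_{\cH^1(\tau_e)}$ with neither a negative power of $h$ nor an $H^2$ term.
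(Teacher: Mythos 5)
Your proposal is correct and follows essentially the same route as the paper, which reduces everything to the explicit formulas $\ga_e=\tfrac13\cot\ta_e\abs{\tau}=\tfrac16 h_{e+1}h_{e-1}\cos\ta_e$ and $\be_e=\tfrac1{12}\cot\ta_e(h_{e+1}^2-h_{e-1}^2)$ together with the mesh condition for (a)--(b) and the trace theorem plus the $P_2$-reproduction of $\Pi_Q$ (Bramble--Hilbert) for (c); you are simply supplying the algebra (law of cosines, Heron cancellation for the area difference) that the paper delegates to the cited earlier works. The one spot where you are no more detailed than the paper is \eqref{beta_gamma4}: both you and the authors leave implicit how conditions (a) and (b) are chained through the triangles around the shared vertex of $e_1,e_2\in\E_h^B$ (which need not share an edge) to compare $h_{e_1\pm1}$ with $h_{e_2\pm1}$.
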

\begin{proof}
The inequalities \eqref{beta_gamma1}--\eqref{beta_gamma3} follow from the \emph{mesh condition $\al$}. From the condition (a) and (b) in Definition~\ref{meshcond}, we have
for any $e_1, e_2\in\E_h^B$ satisfying $e_1\cap e_2\neq\emptyset$ (cf. Figure~\ref{figNotb}),
\eqn{ \frac{\big|h_{e_1-1}h_{e_1+1}\cos\ta_{e_1}-h_{e_2-1}h_{e_2+1}\cos\ta_{e_2}\big|}{h} \ls h^{1+\al}, }
which implies \eqref{beta_gamma4}.

Finally, the inequalities \eqref{trcinq1} and \eqref{trcinq2} follow from the trace theorem.
\end{proof}

\begin{lemma} \label{uui}
Assume that $\T_h$ satisfies {the} \emph{$\al$ approximation ondition}. Then for any $v_h\in V_h$,
\eq{ \abs{\sum_{K\in\T_h}\int_K\na(u-u_I)\cdot\na v_h}\ls \left((kh)^2+kh^{1+\al}+kh^{1+\mu/2}\right)\norme{v_h}_{1,h} C_{u,g}. }
Here $u_I$ is the { linear} interpolant of $u$ on $\Om$.
\end{lemma}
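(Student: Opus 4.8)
The plan is to begin from the element-wise fundamental identity \eqref{fdmtliequ} applied with $\phi=u$, sum it over all $\tau\in\T_h$, and reorganize the double sum $\sum_{\tau}\sum_{e\in\pa\tau}$ into a sum over interior edges (each $e\in\E_h^I$ being shared by the two triangles $\tau_e,\tau_e'$ of the patch $\Om_e$) plus a sum over boundary edges $e\in\E_h^B$ (each belonging to a single triangle). Writing $u_Q=\Pi_Q u$, the whole task then reduces to bounding, edge by edge, the tangential--tangential contributions weighted by $\be_e$ and the mixed tangential--normal contributions weighted by $\ga_e$, and matching the three resulting scales $(kh)^2$, $kh^{1+\al}$, $kh^{1+\mu/2}$.

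On an interior edge I would combine the contributions of $\tau_e$ and $\tau_e'$. Using $\bt_e'=-\bt_e$, $\bn_e'=-\bn_e$ and the continuity of $u_Q$ across $e$ (so that $\pa^2u_Q/\pa\bt_e^2$ is single valued), the combined tangential term $\be_e\frac{\pa(v_h|_{\tau_e})}{\pa\bt_e}-\be_e'\frac{\pa(v_h|_{\tau_e'})}{\pa\bt_e}$ regroups into a multiple of $\frac{\pa\jm{v_h}}{\pa\bt_e}$ (coefficient $\sim\be_e+\be_e'$) plus a multiple of $\frac{\pa\av{v_h}}{\pa\bt_e}$ (coefficient $\be_e-\be_e'$), and likewise for $\ga$. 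The ``average'' parts carry the differences $\be_e-\be_e',\ \ga_e-\ga_e'\ls h^{2+\al}$ from Lemma~\ref{parsestm}; combined with the trace bound \eqref{trcinq1} and $\norm{u}_2\ls kC_{u,g}$ they produce the $kh^{1+\al}$ term. The ``jump'' parts carry coefficients of size $h^2$ acting on $\frac{\pa\jm{v_h}}{\pa\bt_e}$; estimating by Cauchy--Schwarz on $e$ together with a trace bound for the second derivatives of $u$ and with $\norml{\frac{\pa\jm{v_h}}{\pa\bt_e}}{e}\ls h_e^{-1}\norml{\jm{v_h}}{e}\ls h^{(\mu-1)/2}\sqrt{J_0(v_h,v_h)}$ (from the definition of $J_0$ and an inverse inequality on $e$) yields the $kh^{1+\mu/2}$ term.

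The leading $(kh)^2$ contribution comes from the mixed terms. Since the normal derivative of $u_Q$ jumps across interior edges, I would first split $\frac{\pa^2u_Q}{\pa\bt_e\pa\bn_e}=\frac{\pa^2u}{\pa\bt_e\pa\bn_e}-\frac{\pa^2(u-u_Q)}{\pa\bt_e\pa\bn_e}$. The single-valued true-solution part is treated exactly as the tangential terms above (its jump/average split again feeds into $kh^{1+\mu/2}$ and $kh^{1+\al}$, with only harmless $k^2h^{2+\al}$ leftovers), while the interpolation remainder is estimated by \eqref{trcinq2} with weight $\abs{\ga_e}\ls h^2$ and $\abs{u}_{H^3}\ls k^2C_{u,g}$, which is precisely what produces $(kh)^2$.

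The boundary edges are the delicate part, and I expect them to be the main obstacle: there no neighbouring triangle is available for cancellation, and a naive estimate of $\sum_{e\in\E_h^B}\ga_e\int_e\frac{\pa^2u}{\pa\bt_e\pa\bn_e}\frac{\pa v_h}{\pa\bt_e}$ only gives $O(kh)$, which exceeds the asserted budget. To beat this I would invoke the Robin condition \eqref{eq1.1b} to replace $\frac{\pa u}{\pa\bn_e}$ by $g-\i k u$ on $\Ga$, so that $\frac{\pa^2u}{\pa\bt_e\pa\bn_e}=\frac{\pa g}{\pa\bt_e}-\i k\frac{\pa u}{\pa\bt_e}$, and then recover the missing power of $h$ by a discrete summation by parts along the boundary polygon that brings in the edge-difference estimate $\abs{\ga_{e_1}-\ga_{e_2}}\ls h^{2+\al}$ of \eqref{beta_gamma4} together with the $H^2(\Ga)$-regularity of $g$ (through $\abs{g}_{H^2(\Ga)}\ls k^2C_{u,g}$); the $\be_e$-term and the $u-u_Q$ remainder are then harmless because $\abs{\be_e}\ls h^{2+\al}$ and $\abs{\ga_e}\ls h^2$ by Lemma~\ref{parsestm}(b). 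A DG-specific wrinkle is that $v_h$ is discontinuous even at boundary nodes, so this summation by parts generates extra jump contributions that must be absorbed into $J_0(v_h,v_h)$, i.e.\ into $\norme{v_h}_{1,h}$, exactly as in the interior estimate. Collecting the three groups and applying Cauchy--Schwarz over the edges gives the stated bound.
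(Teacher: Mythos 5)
Your proposal is correct and follows essentially the same route as the paper's proof: the fundamental identity is summed and reorganized into interior and boundary edge contributions, the interior terms are split into coefficient-difference parts (controlled by $\abs{\be_e-\be_e'},\abs{\ga_e-\ga_e'}\ls h^{2+\al}$) and jump parts absorbed into $J_0(v_h,v_h)$ via the inverse inequality, the $u-u_Q$ remainders give the $(kh)^2$ term, and the boundary $\ga_e$-term is handled by tangential integration by parts with the node contributions split using \eqref{beta_gamma4} and the Robin condition to bound $\pa u/\pa\bn$ by $g-\i k u$. The only (immaterial) differences are your symmetric jump/average regrouping versus the paper's one-sided split, and keeping $u_Q$ in the leading tangential term via its continuity rather than replacing it by $u$.
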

\begin{proof}
From \eqref{fdmtliequ}, we have
\eqn{
\int_\Om\na(u-u_I)\cdot\na v_h &= \sum_{\tau\in\T_h}\sum_{e\subset\pa\tau} \left( \be_e\int_e\frac{\pa^2 u_Q}{\pa \bt_e^2}\frac{\pa v_h}{\pa \bt_e} + \ga_e\int_e\frac{\pa^2 u_Q}{\pa \bt_e \pa \bn_e}\frac{\pa v_h}{\pa \bt_e} \right)\\
&= I_1+I_2,
}
where
\eqn{
&I_1 = \sum_{e\in\E_h^I} \left[ \int_e\frac{\pa^2 u}{\pa \bt_e^2} \bigg({\be_e\frac{\pa v_h}{\pa \bt_e}|_{K_1}-\be_e'\frac{\pa v_h}{\pa \bt_e}|_{K_2}}\bigg) + \int_e\frac{\pa^2 u}{\pa \bt_e \pa \bn_e} \bigg(\ga_e\frac{\pa v_h}{\pa \bt_e}|_{K_1}-\ga_e'\frac{\pa v_h}{\pa \bt_e}|_{K_2}\bigg) \right. \\
&\qquad\qquad + \be_e\int_e\frac{\pa^2 (u_Q-u)}{\pa \bt_e^2}\frac{\pa v_h}{\pa \bt_e} + \ga_e \int_e\frac{\pa^2 (u_Q-u)}{\pa \bt_e \pa \bn_e}\frac{\pa v_h}{\pa \bt_e}\\
&\qquad\qquad \left. + \be_e'\int_e\frac{\pa^2 (u-u_Q)}{\pa \bt_e^2}\frac{\pa v_h}{\pa \bt_e} + \ga_e' \int_e\frac{\pa^2 (u-u_Q)}{\pa \bt_e \pa \bn_e}\frac{\pa v_h}{\pa \bt_e} \right],\\
&\quad = I_{1,1} + I_{1,2} + I_{1,3}, \\
&I_2 = \sum_{e\in\E_h^B} \left[ \be_e\int_e\frac{\pa^2 u}{\pa \bt_e^2}\frac{\pa v_h}{\pa \bt_e} + \ga_e\int_e\frac{\pa^2 u}{\pa \bt_e \pa \bn_e}\frac{\pa v_h}{\pa \bt_e} \right. \\
&\qquad\qquad \left. + \be_e\int_e\frac{\pa^2 (u_Q-u)}{\pa \bt_e^2}\frac{\pa v_h}{\pa \bt_e} + \ga_e \int_e\frac{\pa^2 (u_Q-u)}{\pa \bt_e \pa \bn_e}\frac{\pa v_h}{\pa \bt_e}\right].
}

First, $I_{1,2}$ and $I_{1,3}$ can be estimated by Lemma~\ref{parsestm} and H\"{o}lder's inequality:
\eqn{
\abs{I_{1,2}+I_{1,3}} &\ls \sum_{e\in\E_h^I} \left( (h^{2+\al}+h^2)\norm{u}_{H^{3}(\tau_e)} + h^{1+\al}\norm{u}_{H^{2}(\tau_e)} \right) \abs{v_h}_{\cH^1(\tau_e)} \\
&\ls \left( (h^{2+\al}+h^2)\norm{u}_3 + h^{1+\al}\norm{u}_2 \right) \abs{v_h}_{\cH^1} \\
&\ls \left( (kh)^{2} + kh^{1+\al} \right)\abs{v_h}_{\cH^1}  C_{u,g}.
}
From Lemma~\ref{parsestm} and the inverse inequality,
\eqn{ I_{1,1} &=  \sum_{e\in\E_h^I} \left[ (\be_e-\be_e') \int_e\frac{\pa^2 u}{\pa \bt_e^2} \frac{\pa v_h}{\pa \bt_e}|_{K_1} + (\ga_e-\ga_e')\int_e\frac{\pa^2 u}{\pa \bt_e \pa \bn_e} \frac{\pa v_h}{\pa \bt_e}|_{K_1} \right. \\
&\qquad + \left. \be_e'\int_e\frac{\pa^2 u}{\pa \bt_e^2} \jm{\frac{\pa v_h}{\pa \bt_e}}+ \ga_e \int_e\frac{\pa^2 u}{\pa \bt_e \pa \bn_e} \jm{\frac{\pa v_h}{\pa \bt_e}} \right] \\
&\ls \sum_{e\in\E_h^I} \bigg[ \left(h^{2+\al}\norm{u}_{H^{3}(\tau_e)} + h^{1+\al}\norm{u}_{H^{2}(\tau_e)}\right) \abs{v_h}_{\cH^1(\tau_e)} \\
&\qquad  + \left(h^{5/2}\norm{u}_{H^{3}(\tau_e)} + h^{3/2}\norm{u}_{H^{2}(\tau_e)}\right) \bigg(\sum_{e\in\E_h^I}\norm{\jm{\frac{\pa v_h}{\pa \bt_e}}}_{L^2(e)}^2\bigg)^{1/2} \bigg] \\
&\ls \left( (kh)^{2} + kh^{1+\al} +kh^{1+\mu/2} \right)\norm{v_h}_{1,h}  C_{u,g}, }
where we have used the ineqaulities
\eqn{ \bigg(\sum_{e\in\E_h^I}\norm{\jm{\frac{\pa v_h}{\pa \bt_e}}}_{L^2(e)}^2\bigg)^{1/2} &= \bigg(\sum_{e\in\E_h^I}\norm{\frac{\pa \jm{v_h}}{\pa \bt_e}}_{L^2(e)}^2\bigg)^{1/2} \ls \bigg(\sum_{e\in\E_h^I}h_e^{-2}\norm{\jm{v_h}}_{L^2(e)}^2\bigg)^{1/2} \\
&\ls h^{\mu/2-1/2}J_0(v_h,v_h)^{1/2} \leq h^{\mu/2-1/2} \norm{v_h}_{1,h}. }
By combining the inequalities above, we get
\eq{ \abs{I_1}\ls \left( (kh)^{2} + kh^{1+\al} +kh^{1+\mu/2} \right)\norm{v_h}_{1,h}  C_{u,g}. \label{I1est}}
Then we turn to the estimate of $I_2$. From \eqref{beta_gamma3} and \eqref{trcinq2},
\eq{
&\sum_{e\in\E_h^B} \left[ \be_e\int_e\frac{\pa^2 u}{\pa t_e^2}\frac{\pa v_h}{\pa t_e} + \be_e\int_e\frac{\pa^2 (u_Q-u)}{\pa t_e^2}\frac{\pa v_h}{\pa t_e} + \ga_e \int_e\frac{\pa^2 (u_Q-u)}{\pa t_e \pa n_e}\frac{\pa v_h}{\pa t_e}\right]\label{part1_I2}\\
&\ls \sum_{e\in\E_h^B} \big( h^{1+\al}\norm{u}_{H^2(\tau_e)}+(h^{2+\al}+h^2)\norm{u}_{H^3(\tau_e)} \big) \abs{v_h}_{\cH^1(\tau_e)}\nn\\
&\ls \big( kh^{1+\al}+(kh)^{2} \big)\abs{v_h}_{\cH^1}  C_{u,g}.\nn
}
Therefore, we only need to estimate the remaining terms of $I_2$. Denote by $z_i$ the nodes on $\Ga$. Denote by $e_{i},e_{i+1}\in\E_h^B$ sharing $z_i$ with counterclockwise orientation (cf. Figure~\ref{figNotb}). Denote by $\jm{\cdot}_{z_i}=(\cdot|_{e_{i+1}})(z_i)-(\cdot|_{e_i})(z_i)$ and $\set{\cdot}_{z_i}=\big(\cdot|_{e_i}(z_i)+\cdot|_{e_{i+1}}(z_i)\big)/2$. Then we have
\eq{
\sum_{e\in\E_h^B}\ga_e\int_e\frac{\pa^2 u}{\pa t_e \pa n_e}\frac{\pa v_h}{\pa t_e} &= \sum_{z_i\in\Ga\bigcap\N_h} \jm{\ga_ev_h}_{z_i} \frac{\pa^2 u}{\pa t_e \pa n_e}(z_i) -  \sum_{e\in\E_h^B}\ga_e\int_e\frac{\pa^3 u}{\pa t_e^2 \pa n_e}v_h, \label{part20_I2}\\
&= I_{2,1} + I_{2,2} + I_{2,3},\nn }
where
\eqn{ I_{2,1} &= \sum_{z_i\in\Ga\bigcap\N_h} \jm{\ga_e}_{z_i} \set{v_h}_{z_i} \frac{\pa^2 u}{\pa t_e \pa n_e}(z_i),\\
I_{2,2} &= \sum_{z_i\in\Ga\bigcap\N_h} \set{\ga_e}_{z_i}\jm{v_h}_{z_i} \frac{\pa^2 u}{\pa t_e \pa n_e}(z_i),\\
I_{2,3} &= - \sum_{e\in\E_h^B}\ga_e\int_e\frac{\pa^3 u}{\pa t_e^2 \pa n_e}v_h. }
Suppose that $w\in H^1([a,b])$ and denote by $h_{ab}=b-a$, then we have
\eq{
w^2(b) &= \int_a^b \bigg( \frac{x-a}{b-a} w^2(x) \bigg)' \dx = \frac{1}{b-a} \int_a^b w^2 + 2\int_a^b \frac{x-a}{b-a}ww' \label{eq_p2e}\\
& \leq \frac{1}{h_{ab}} \norm{w}_{L^2([a,b])}^2 + 2\abs{w}_{H^1([a,b])} \norm{w}_{L^2([a,b])}, \nn
}
which implies
\eq{
\abs{I_{2,1}} &\leq  \sum_{z_i\in\Ga\bigcap\N_h} \abs{\jm{\ga_e}_{z_i}} \bigg( \frac{1}{h_{e_i}} \abs{\frac{\pa u}{\pa n_{e_i}}}_{H^1(e_i)}^2 + 2\abs{\frac{\pa u}{\pa n_{e_i}}}_{H^2(e_i)}\abs{\frac{\pa u}{\pa n_{e_i}}}_{H^1(e_i)}\bigg)^{1/2} \label{part21_I21}\\
&\qquad \cdot \bigg( \frac{1}{h_{e_i}} \norm{v_h}_{L^2(e_i\cap e_{i+1})}^2 + 2\abs{v_h}_{\cH^1(e_i)}\norm{v_h}_{L^2(e_i\cap e_{i+1})} \bigg)^{1/2}\nn\\
&\ls \max_{z_i\in\Ga\bigcap\N_h} \abs{\jm{\ga_e}_{z_i}} \bigg( \frac{1}{h} \abs{\frac{\pa u}{\pa n}}_{H^1(\Ga)}^2 + \abs{\frac{\pa u}{\pa n}}_{H^2(\Ga)}\abs{\frac{\pa u}{\pa n}}_{H^1(\Ga)}\bigg)^{1/2}\nn\\
&\qquad \cdot \bigg( \frac{1}{h} \norm{v_h}_{L^2(\Ga)}^2 + \abs{v_h}_{\cH^1(\Ga)}\norm{v_h}_{L^2(\Ga)} \bigg)^{1/2}\nn\\
&\ls \frac{\max_{z_i\in\Ga\bigcap\N_h}\abs{\jm{\ga_e}_{z_i}}}{h} \bigg( \big(\abs{g}_{H^1(\Ga)}^2 + k\abs{u}_{H^1(\Ga)}^2\big) + h\big(\abs{g}_{H^2(\Ga)}+k\abs{u}_{H^2(\Ga)}\big) \cdot \nn\\
&\quad\big(\abs{g}_{H^1(\Ga)}+k\abs{u}_{H^1(\Ga)}\big) \bigg)^{1/2} \cdot \bigg( \norm{v_h}_{L^2(\Ga)}^2 + h \abs{v_h}_{\cH^1(\Ga)}\norm{v_h}_{L^2(\Ga)} \bigg)^{1/2}\nn\\
&\ls h^{1+\al} k^{3/2} \bigg( \norm{v_h}_0 \norm{v_h}_{\cH^1} + h^{1/2} \norm{v_h}_{\cH^1}^{3/2}\norm{v_h}_0^{1/2}\bigg)^{1/2} C_{u,g}\nn\\
&\ls kh^{1+\al} \norme{v_h}_{1,h} C_{u,g},\nn
}
where we have used \eqref{beta_gamma4}.

For any $z_i\in\N_h^B$, let $\tau_{i,1},\ \tau_{i,2},\ \cdots,\ \tau_{i,n_i}$ be $n_i$ triangles clockwise around $z_i$ (cf. \ref{figTri}).
If $n_i=1$ for some $z_i\in\N_h^B$, it is easy to see that $\jm{\ga_ev_h}_{z_i}$ in \eqref{part20_I2} is equal to zero. Thus we assume that $n_i\geq2$ for all $z_i\in\N_h^B$ for simplicity of presentation. Denote by $e_{ij}=\tau_{i,j}\cap \tau_{i,j+1}$ for $j=1,2,\cdots,n_i-1$. Clearly, $e_{ij}$ are in $\E_h^I$ and we have
\eqn{ \abs{\jm{v_h}_{z_i}} \leq \sum_{j=1}^{n_i-1} \abs{\jm{v_h}_{e_{ij}}} \leq \sqrt{n_i-1} \bigg( \sum_{j=1}^{n_i-1} \abs{\jm{v_h}_{e_{ij}}}^2 \bigg)^{1/2}. }
It is well known that $n_i$ can be bounded by a constant independent of the mesh size $h$ for any regular triangulation. Therefore, we have

\eq{
\abs{I_{2,2}} &\leq  \sum_{z_i\in\N_h^B} \abs{\set{\ga_e}_{z_i}} \bigg( \frac{1}{h_{e_i}} \abs{\frac{\pa u}{\pa n_{e_i}}}_{H^1(e_i)}^2 + 2\abs{\frac{\pa u}{\pa n_{e_i}}}_{H^2(e_i)}\abs{\frac{\pa u}{\pa n_{e_i}}}_{H^1(e_i)}\bigg)^{1/2} \label{part21_I22} \\
&\qquad \cdot \bigg( \sum_{j=1}^{n_i-1} \frac{1}{h_{e_{ij}}} \norm{\jm{v_h}}_{L^2(e_{ij})}^2 + 2\abs{\jm{v_h}}_{H^1(e_{ij})}\norm{\jm{v_h}}_{L^2(e_{ij})} \bigg)^{1/2}\nn\\
&\ls \max_{z_i\in\N_h^B} \abs{\set{\ga_e}_{z_i}} \bigg( \frac{1}{h} \abs{\frac{\pa u}{\pa n}}_{H^1(\Ga)}^2 + \abs{\frac{\pa u}{\pa n}}_{H^2(\Ga)}\abs{\frac{\pa u}{\pa n}}_{H^1(\Ga)}\bigg)^{1/2}\nn\\
&\qquad \cdot \bigg( \frac{h^\mu}{\rho_{0}} \sum_{z_i\in\N_h^B} \sum_{j=1}^{n_i-1}  \frac{\rho_{0,e_{ij}}}{h_{e_{ij}}^{1+\mu}} \norm{\jm{v_h}}_{L^2(e_{ij})}^2  \bigg)^{1/2} \nn\\
&\ls h^{(\mu-1)/2}\max_{z_i\in\N_h^B} \abs{\set{\ga_e}_{z_i}} \bigg( \big(\abs{g}_{H^1(\Ga)}^2 + k\abs{u}_{H^1(\Ga)}^2\big) + h\big(\abs{g}_{H^2(\Ga)} \nn\\
&\qquad +k\abs{u}_{H^2(\Ga)}\big) \cdot \big(\abs{g}_{H^1(\Ga)}+k\abs{u}_{H^1(\Ga)}\big) \bigg)^{1/2} J_0(v_h,v_h)^{1/2} \nn\\
&\ls k^{3/2}h^{(3+\mu)/2} \norme{v_h}_{1,h} C_{u,g}\nn
}
where we have used \eqref{beta_gamma3}.

On the other hand,
\eq{
\abs{I_{2,3}} &\ls h^2 \abs{\frac{\pa u}{\pa n}}_{H^2(\Ga)} \norm{v_h}_{L^2(\Ga)}\label{part22_I2}\\
& \ls h^2 \big(  \abs{g}_{H^2(\Ga)} + k \abs{u}_{H^2(\Ga)} \big) \cdot \norm{v_h}_0^{1/2} \norm{v_h}_{\cH^1}^{1/2} \nn\\
& \ls k^{3/2} h^2  \norme{v_h}_{1,h} C_{u,g}. \nn
}
From \eqref{part20_I2}--\eqref{part22_I2},
\eq{ \abs{\sum_{e\in\E_h^B}\ga_e\int_e\frac{\pa^2 u}{\pa t_e \pa n_e}\frac{\pa v_h}{\pa t_e}} \ls \big((kh)^2+kh^{1+\al}+k^{3/2}h^{(3+\mu)/2}\big)\norme{v_h}C_{u,g}. \label{part2_I2} }
Then the estimate of $I_2$ can be obtained from \eqref{part1_I2} and \eqref{part2_I2},
\eq{ \abs{I_2} \ls \big((kh)^2+kh^{1+\al}+k^{3/2}h^{(3+\mu)/2}\big)\norme{v_h}_{1,h}C_{u,g}. \label{I2est} }

Finally, by combining \eqref{I1est} and \eqref{I2est}, we prove the lemma.
\end{proof}

\begin{theorem} \label{uhui}
Assume that $\T_h$ satisfies {the} \emph{mesh condition $\al$}. $u_h$ is the DG finite element solution of the scheme~\eqref{eq_DG-FEM} and $u_I$ is the linear interpolation of the solution $u$ to \eqref{eq1.1a}--\eqref{eq1.1b}. There exists a constant $C_0$
independent of $k$ and $h$, such that if $k(kh)^2\leq C_0$, we have
\eq{ \norme{u_h-u_I}_{1,h} \ls \left(kh^{1+\al}+kh^{1+\mu/2}+k(kh)^2\right) C_{u,g}. \label{eq:th1eq1} }
\end{theorem}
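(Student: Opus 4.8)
The plan is to estimate $\xi_h := u_h - u_I \in V_h$ directly in the $\norme{\cdot}_{1,h}$-seminorm, exploiting Galerkin orthogonality together with the G\r{a}rding-type coercivity of Lemma~\ref{lemma_norms_equal}. Since $u\in H^3(\Om)$ solves the DG variational formulation for every test function in $E$ (Remark~2.1(a)) while $u_h$ solves \eqref{eq_DG-FEM}, subtracting and writing $u-u_h=(u-u_I)-\xi_h$ gives, for all $v_h\in V_h$,
\eqn{ a_h(\xi_h,v_h) - k^2(\xi_h,v_h) + \i k\pd{\xi_h,v_h} = a_h(u-u_I,v_h) - k^2(u-u_I,v_h) + \i k\pd{u-u_I,v_h}. }
Denote by $R$ the right-hand side evaluated at $v_h=\xi_h$. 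Taking $v_h=\xi_h$ and reading off the real part (the form $a_h(\xi_h,\xi_h)\in\R$, since the method is the symmetric DG method) yields $a_h(\xi_h,\xi_h)=k^2\norm{\xi_h}_0^2+\re R$. Substituting this into the coercivity bound $\norme{\xi_h}_{1,h}^2\ls a_h(\xi_h,\xi_h)+k^2(\xi_h,\xi_h)$ of Lemma~\ref{lemma_norms_equal} reduces the theorem to bounding $k^2\norm{\xi_h}_0^2$ and $\abs{R}$, since $\norme{\xi_h}_{1,h}^2\ls k^2\norm{\xi_h}_0^2+\abs{R}$.

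For $\abs{R}$, the genuinely superconvergent contribution is the volume term $\sum_{K}\int_K\na(u-u_I)\cdot\na\xi_h$ inside $a_h(u-u_I,\xi_h)$, which is exactly what Lemma~\ref{uui} controls by $\big((kh)^2+kh^{1+\al}+kh^{1+\mu/2}\big)\norme{\xi_h}_{1,h}C_{u,g}$; note that the continuity estimate of Lemma~\ref{lemma_norms_equal} is useless here, as it would only give the non-superconvergent order $kh$. Because $u$ and its linear interpolant are both continuous, every jump $\jm{u-u_I}$ vanishes, so the remaining piece of $a_h(u-u_I,\xi_h)$ is the edge sum $-\sum_{e\in\E_h^I}\pd{\set{\pa(u-u_I)/\pa\bn_e},\jm{\xi_h}}_e$. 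I would control this by Cauchy--Schwarz after weighting the two factors by $h_e^{\pm(1+\mu)/2}$: the trace/interpolation bound $\norm{\pa(u-u_I)/\pa\bn}_{L^2(e)}^2\ls h\norm{u}_{H^2(\tau_e)}^2$ produces the factor $kh^{1+\mu/2}C_{u,g}$, while the jump factor is absorbed into $J_0(\xi_h,\xi_h)^{1/2}\ls\norm{\xi_h}_{1,h}$. The lower-order terms $k^2(u-u_I,\xi_h)$ and $\i k\pd{u-u_I,\xi_h}$ are then treated by the interpolation estimates $\norm{u-u_I}_0\ls h^2\norm{u}_2\ls h^2 k\,C_{u,g}$ (giving the $(kh)^2$ contribution) and $\norm{u-u_I}_{L^2(\Ga)}\ls h^2\abs{u}_{H^2(\Ga)}$ combined with a trace inequality for the broken function $\xi_h$, exactly paralleling the boundary analysis of $I_2$ in the proof of Lemma~\ref{uui}.

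To control $k^2\norm{\xi_h}_0^2$, no duality argument is needed: the preasymptotic estimate of Lemma~\ref{lemma_pre_err} already gives $k\norm{u-u_h}_0\ls\big((kh)^2+k(kh)^2\big)C_{u,g}$, and the triangle inequality with $k\norm{u-u_I}_0\ls(kh)^2C_{u,g}$ yields $k\norm{\xi_h}_0\ls k(kh)^2C_{u,g}$ (using $(kh)^2\le k(kh)^2$). Feeding the bounds on $\abs{R}$ and $k^2\norm{\xi_h}_0^2$ into $\norme{\xi_h}_{1,h}^2\ls k^2\norm{\xi_h}_0^2+\abs{R}$ and applying Young's inequality to absorb the factor $\norme{\xi_h}_{1,h}$ that appears linearly in the estimate of $\abs{R}$ then delivers the claimed bound $\norme{u_h-u_I}_{1,h}\ls\big(kh^{1+\al}+kh^{1+\mu/2}+k(kh)^2\big)C_{u,g}$.

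The main obstacle is the indefiniteness of the Helmholtz sesquilinear form: coercivity holds only up to the term $k^2\norm{\xi_h}_0^2$, so the argument cannot close by coercivity alone and must borrow the preasymptotic $L^2$ estimate of Lemma~\ref{lemma_pre_err} — this is precisely where the pollution term $k(kh)^2$ enters the final bound. A secondary technical difficulty is the boundary inner-product term $\i k\pd{u-u_I,\xi_h}$, whose sharp treatment requires combining a boundary interpolation estimate with a trace inequality for $\xi_h$ and verifying, under the constraint $k(kh)^2\le C_0$, that the resulting power of $h$ remains within the target; the cleanest route follows the integration-by-parts and moment arguments already used for the boundary edges in Lemma~\ref{uui}.
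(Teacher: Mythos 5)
Your proposal is correct and follows essentially the same route as the paper: Galerkin orthogonality plus the coercivity of Lemma~\ref{lemma_norms_equal}, with the volume term handled by Lemma~\ref{uui}, the interior-edge term by a weighted trace/penalty argument, the lower-order volume and boundary terms by interpolation and trace estimates, and the $k^2\|\xi_h\|_0^2$ contribution controlled through the preasymptotic $L^2$ bound of Lemma~\ref{lemma_pre_err}. The only differences are cosmetic (you take real parts of the error equation tested with $\xi_h$ before invoking coercivity, whereas the paper adds and subtracts $2k^2(\xi_h,\xi_h)$ inside the real part, and the paper's boundary term $\i k\langle u-u_I,\xi_h\rangle$ needs only a direct trace inequality rather than the integration-by-parts machinery of Lemma~\ref{uui}).
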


\begin{proof}
For simplicity of presentation, { we denote} $v_h=u_h-u_I$. By Lemma~\ref{lemma_norms_equal} and the Galerkin orthogonality, we have
\eq{
&\norme{u_h-u_I}_{1,h}^2 \ls  a_h(u_h-u_I,v_h) + k^2(u_h-u_I,v_h)  \label{eq1}\\
&= \Re\big( a_h(u_h-u_I,v_h) - k^2(u_h-u_I,v_h) + \i k\pd{u_h-u_I,v_h} + 2k^2(u_h-u_I,v_h)\big)  \nn\\
&= \Re\big( a_h(u-u_I,v_h) - k^2(u-u_I,v_h) + \i k\pd{u-u_I,v_h} + 2k^2(u_h-u_I,v_h)\big) \nn\\
&= I_1 + I_2 + I_3 + I_4.\nn
}
{ It is} well known that
\eq{ \abs{I_2} \leq k\norm{u-u_I}_{0} k\norm{v_h} \ls kh^2\norm{u}_{2} \norme{v_h}_{1,h}. \label{eq_I2}}
By the trace inequality, we have
\eq{
\abs{I_3} &\leq k\norm{u-u_I}_{L^2(\pa\Om)}\norm{v_h}_{L^2(\pa\Om)} \label{eq_I3}   \\
&\ls  kh^2\norm{u}_{H^2(\pa\Om)}\norm{v_h}_{L^2(\pa\Om)}   \nn\\
&\ls k^{1/2}h^2\norm{u}^{1/2}_{2}\norm{u}^{1/2}_{3}\norme{v_h}_{1,h}    \nn\\
&\ls (kh)^2 \norme{v_h}_{1,h} C_{u,g}.\nn
}
From Lemma~\ref{lemma_pre_err}, we know that there exists a constant $C_0$ independent on $k$ and $h$, such that if $k(kh)^2\leq C_0$,
the following inequality holds,
\eqn{ k\norm{u-u_h}_{L^2(\Om)} \ls \left((kh)^2+k(kh)^2\right) C_{u,g}, }
which implies
\eq{ \abs{I_4} &\leq 2k\norm{u_h-u_I}_{0} \cdot k\norm{v_h}_{0} \label{eq_I4}\\
&\leq 2\left(k\norm{u-u_h}_{0}+k\norm{u-u_I}_{0}\right)\cdot k\norm{v_h}_{0}   \nn\\
&\ls  \left((kh)^2+k(kh)^2\right) \norme{v_h}_{1,h} C_{u,g}.\nn
}
Next, we estimate $I_1$. From the definition of $a_h(\cdot,\cdot)$ and the fact that $u_I$ is continuous in $\Om$, we have
\eqn{ \abs{I_1} &\leq  \abs{ \sum_{\tau\in\T_h} \int_\tau\na(u-u_I)\cdot\na\bar v_h } + \abs{ \sum_{e\in\E_h^I} \pd{\set{\frac{\pa (u-u_I)}{\pa \bn_e}},\jm{v_h}}_e} \\
&= I_{1,1} + I_{1,2}.   }
Then
\eq{ I_{1,2} &\leq \sum_{e\in\E_h^I} \norm{\set{\frac{\pa (u-u_I)}{\pa \bn_e}}}_{L^2(e)} \norm{\jm{v_h}}_{L^2(e)} \label{eq_I12}\\
&\ls h^{1/2} \sum_{e\in\E_h^I} \norm{u}_{H^2(\tau_e)} \norm{\jm{v_h}}_{L^2(e)} \nn\\
&\ls h^{1+\mu/2} \norm{u}_2 J_0(v_h,v_h)^{1/2} \nn\\
&\ls kh^{1+\mu/2} \norme{v_h}_{1,h} C_{u,g}. \nn }
From Lemma~\ref{uui} and \eqref{eq_I12},
\eq{ \abs{I_1} \ls \left((kh)^2+kh^{1+\al}+kh^{1+\mu/2}\right)\norme{v_h}_{1,h} C_{u,g}. \label{eq_I1}  }
Therefore, by combining the equations \eqref{eq1}--\eqref{eq_I4} and \eqref{eq_I1}, we have if $k(kh)^2\leq C_0$, the following estimate holds:
\eqn{ \norme{u_h-u_I}_{1,h}^2 \ls  \left((kh)^2+kh^{1+\al}+kh^{1+\mu/2}\right)\norme{v_h}_{1,h} C_{u,g}. }
This completes the proof.
\end{proof}

In this paper, for any node point $z\in\N_h$,  let $n_z$ be the number of triangles associated with $z$ and let $\tau_{z,1},\ \tau_{z,2},\cdots,\ \tau_{z,n_z}$ be the elements having $z$ such that they are counterclockwise around $z$ as shown in Figure~\ref{figTri}. $\tau_{z,j}$ means $\tau_{z,\mathrm{mod}(j,n_z)}$ for the integer $j>n_z$.
\begin{figure}
\begin{center}
\setlength{\unitlength}{0.25mm}
  \begin{picture}(390,110)

  \linethickness{0.25mm}
  \put(195,55){\line(1,0){100}}
  \put(195,55){\line(-1,0){100}}
  \put(195,55){\line(0,1){50}}
  \put(195,55){\line(0,-1){50}}
  \put(95,55){\line(2,1){100}}
  \put(195,55){\line(2,1){100}}
  \put(95,5){\line(2,1){100}}
  \put(195,5){\line(2,1){100}}
  \put(95,5){\line(1,0){100}}
  \put(95,5){\line(0,1){50}}
  \put(195,105){\line(1,0){100}}
  \put(295,55){\line(0,1){50}}

  \put(195,45){$z$}
  \put(205,25){$\tau_{z,1}$}
  \put(245,65){$\tau_{z,2}$}
  \put(205,85){$\tau_{z,3}$}
  \put(165,75){$\tau_{z,4}$}
  \put(105,35){$\tau_{z,5}$}
  \put(165,25){$\tau_{z,6}$}

  \end{picture}
\end{center}
\caption{$n_z$ triangles sharing $z$ with $n_z=6$.}
\label{figTri}
\end{figure}
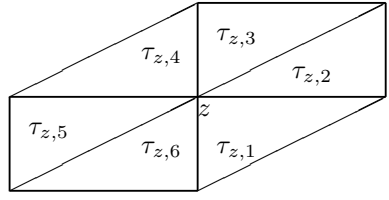

\begin{lemma}
Assume that $\T_h$ satisfies {the} \emph{mesh condition $\al$} and $u_h$ is the DG finite element solution of the scheme~\eqref{eq_DG-FEM}. For any $z\in\N_h$ and $\tau\in\T_h$ having the node point $z$, let $u_h(z,\tau)$ be $u_h|_\tau(z)$.   There exists a constant $C_0$ independent of $k$ and $h$, such that if $k(kh)^2\leq C_0$,
we have
\eqn{ \left( \sum_{z\in\N_h}\sum_{i=1}^{n_z}\abs{u_h(z,\tau_{i+1})-u_h(z,\tau_i)}^2 \right)^{1/2}  \ls h^{\mu/2} \left(kh^{1+\al}+kh^{1+\mu/2}+k(kh)^2\right) C_{u,g}.  }
\end{lemma}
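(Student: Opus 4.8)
The plan is to recognize the nodal difference $u_h(z,\tau_{i+1})-u_h(z,\tau_i)$ as the pointwise value at $z$ of the jump $\jm{u_h}$ across the interior edge shared by the two consecutive triangles, and then to trade these pointwise values for the penalty functional $J_0(u_h,u_h)$, which is already controlled by Theorem~\ref{uhui}. First I would fix a node $z$ and note that whenever $\tau_i$ and $\tau_{i+1}$ share an interior edge $e=\tau_i\cap\tau_{i+1}\in\E_h^I$ with $z$ as an endpoint, the restriction $\jm{u_h}|_e$ is an affine function on $e$ and $u_h(z,\tau_{i+1})-u_h(z,\tau_i)=\pm\jm{u_h}(z)$. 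For an affine function $w$ on an edge $e$ the elementary one-dimensional norm-equivalence estimate gives $\abs{w(z)}^2\ls h_e^{-1}\norm{w}_{L^2(e)}^2$, so each such genuine term satisfies $\abs{u_h(z,\tau_{i+1})-u_h(z,\tau_i)}^2\ls h_e^{-1}\norm{\jm{u_h}}_{L^2(e)}^2$.

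Next I would deal with the wrap-around term and the global counting. For an interior node the triangles form a closed fan, so every one of the $n_z$ consecutive pairs shares an interior edge and all terms are genuine. For a boundary node $z\in\N_h^B$ the pair $(\tau_{n_z},\tau_1)$ need not share an edge; here I would use the telescoping identity $\sum_{i=1}^{n_z}\big(u_h(z,\tau_{i+1})-u_h(z,\tau_i)\big)=0$ to write the spurious wrap-around difference as minus the sum of the $n_z-1$ genuine differences, and then Cauchy--Schwarz together with the uniform bound on $n_z$ for a regular mesh shows its square is controlled by the sum of the genuine terms. Consequently the whole double sum is bounded, up to a constant, by $\sum_{z\in\N_h}\sum_{e\ni z,\,e\in\E_h^I}\abs{\jm{u_h}(z)}^2$. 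Since each interior edge has exactly two endpoints, reindexing by edges shows this is comparable to $\sum_{e\in\E_h^I}\sum_{z\in\partial e}\abs{\jm{u_h}(z)}^2\ls\sum_{e\in\E_h^I}h_e^{-1}\norm{\jm{u_h}}_{L^2(e)}^2$, each edge being counted a bounded number of times.

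Finally I would convert this edge sum into the penalty functional. Writing $h_e^{-1}=h_e^{\mu}\,h_e^{-1-\mu}$ and using $\rho_{0,e}\ge\underline{\rho}$ gives $\sum_{e\in\E_h^I}h_e^{-1}\norm{\jm{u_h}}_{L^2(e)}^2\ls h^{\mu}\,J_0(u_h,u_h)$. Because $u_I$ is continuous its jumps vanish, so $\jm{u_h}=\jm{u_h-u_I}$ and hence $J_0(u_h,u_h)=J_0(u_h-u_I,u_h-u_I)\le\norm{u_h-u_I}_{1,h}^2\le\norme{u_h-u_I}_{1,h}^2$. Invoking Theorem~\ref{uhui} under the hypothesis $k(kh)^2\le C_0$ then yields $J_0(u_h,u_h)^{1/2}\ls\big(kh^{1+\al}+kh^{1+\mu/2}+k(kh)^2\big)C_{u,g}$, and taking the square root of the assembled chain of inequalities produces the surplus factor $h^{\mu/2}$ and the claimed estimate.

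The main obstacle is not analytic but combinatorial bookkeeping: one must carefully justify that the cyclic wrap-around difference at boundary nodes introduces no genuinely new quantity, and that the passage from node-indexed to edge-indexed summation only multiplies the count by a bounded factor (relying on the standing assumption that each node carries at least one interior edge and on $n_z$ being mesh-independent). The only quantitative inputs are the affine inverse estimate $\abs{w(z)}^2\ls h_e^{-1}\norm{w}_{L^2(e)}^2$ and the harvesting of the surplus power $h^{\mu}$ from the penalty weight $h_e^{-1-\mu}$; everything else reduces to Theorem~\ref{uhui}.
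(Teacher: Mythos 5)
Your proposal is correct and follows essentially the same route as the paper: rewrite the nodal differences as edge jumps, bound the point values by $h_e^{-1}\norm{\jm{u_h}}_{L^2(e)}^2$ via the one-dimensional inverse/trace estimate, extract the factor $h^{\mu}$ from the penalty weight so the whole sum is controlled by $h^{\mu}J_0(u_h-u_I,u_h-u_I)$ (using $\jm{u_I}_e=0$), and conclude with Theorem~\ref{uhui}. Your explicit telescoping treatment of the wrap-around pair $(\tau_{n_z},\tau_1)$ at boundary nodes is in fact more careful than the paper's argument, which simply asserts equality of the node-indexed and edge-indexed sums.
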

\begin{proof}
For any $e\in\E_h^I$, let $z_e,\ z_e'$ be its two endpoints and $\tau_e,\ \tau_e'$ be two elements sharing $e$ . Then from \eqref{eq_p2e} and the discrete inverse inequalities, we have
\eqn{ &\bigg( \sum_{z\in\N_h}\sum_{i=1}^{n_z}\abs{u_h(z,\tau_{i+1})-u_h(z,\tau_i)}^2 \bigg)^{1/2} \\
&= \bigg( \sum_{e\in\E_h^I}\abs{u_h(z_e,\tau_e)-u_h(z_e,\tau_e')}^2 + \abs{u_h(z_e',\tau_e)-u_h(z_e',\tau_e')}^2 \bigg)^{1/2} \\
&\ls \bigg( \sum_{e\in\E_h^I} \frac{1}{h_e} \norm{\jm{u_h}}_{L^2(e)}^2 + \norm{\jm{u_h}}_{L^2(e)} \norm{\jm{u_h}}_{H^1(e)} \bigg)^{1/2} \\
&\ls \bigg( \sum_{e\in\E_h^I} \frac{1}{h_e} \norm{\jm{u_h}}_{L^2(e)}^2 \bigg)^{1/2}.  }
 From the fact that the linear interpolation $u_I$ of the solution $u$ to \eqref{eq1.1a}--\eqref{eq1.1b} is continuous , that is $\jm{u_I}_e=0\ \forall e\in\E_H^I$, and Theorem~\ref{uhui},
\eqn{ & \bigg( \sum_{e\in\E_h^I} \frac{1}{h_e} \norm{\jm{u_h}}_{L^2(e)}^2  \bigg)^{1/2} = \bigg( \sum_{e\in\E_h^I} \frac{1}{h_e} \norm{\jm{u_h-u_I}}_{L^2(e)}^2 \bigg)^{1/2}\\
&\ls h^{\mu/2} J_0(u_h-u_I,u_h-u_I)^{1/2} \ls h^{\mu/2} \norme{u_h-u_I}_{1,h} \\
&\ls h^{\mu/2} \left((kh)^2+kh^{1+\al}+kh^{1+\mu/2}\right) C_{u,g}. }
This completes the proof.
\end{proof}

\section{The gradient recovery operator $G_h$ and its superconvergence}\label{sec_ppr}
In this section, we define a polynomial preserving recovery method for the discontinuous finite element space and derive the superconvergent error estimate.

We first recall a gradient recovery operator developed in 2004 for the continuous finite element methods, which is called polynomial
preserving recovery (PPR). Let $\tilde{V}_h$ be the approximation space of continuous piecewise linear polynomials over $\T_h$ and let $\tilde{G}_h:C(\Om)\mapsto \tilde{V}_h\times \tilde{V}_h$ be the gradient recovery operator. Given a node $z\in\N_h$, we select $n\geq6$ sampling points $z_j\in\N_h$, $j=1,2,\cdots,n$, in an element
patch $\om_z$ containing $z$ ($z$ is one of $z_j$) and fit a polynomial of degree $2$, in the least
squares sense, with values of $w\in C(\Om)$ at those sampling points. First, we find $p_2\in P_2(\om_z)$ for some
$w\in C(\Om)$ such that
\eq{ \sum_{j=1}^n(p_2-w)^2(z_j)=\min_{q\in P_2}\sum_{j=1}^n(q-w)^2(z_j). }
Here $P_2(\om_z)$ is the well-known piecewise quadratic polynomial space defined on $\om_z$.
The recovery gradient at $z$ is then defined as
\eq{ \tilde G_hw(z)=(\na p_2)(z). }

We define a PPR operator $G_h$ over the DG finite element space.  For any $u_h\in V_h$, we define a continuous piecewise linear polynomial $\tilde{u}_h\in\tilde{V}_h$ by
\eqn{ \tilde{u}_h(z) = \la_1 u_h(z,\tau_{z,1}) + \la_2 u_h(z,\tau_{z,2}) + \cdots + \la_{n_z} u_h(z,\tau_{z,n_z})\quad \forall z\in\N_h,   }
where $\la_j,\ j=1,2,\cdots,n_z$,are non-negative numbers satisfying $\la_1+\la_2+\cdots+\la_{n_z}=1$.

We define the gradient recovery operator $G_h$ from $V_h$ to $\tilde{V}_h\times \tilde{V}_h$ by
\eq{ G_hu_h := \tilde{G}_h\tilde{u}_h. }
Clearly, for any $v_h\in\tilde{V}_h$, $G_hv_h$ is completely equal to $\tilde{G}_hv_h$.

\begin{lemma}
For any node point $z\in\N_h$,  let $\tau_{z,1},\ \tau_{z,2},\cdots,\ \tau_{z,n_z}$ be the $n_z$ elements counterclockwise around $z$ as shown in Figure~\ref{figTri}.
The following inequality holds for any $u_h\in V_h$
\eq{ \norm{G_hu_h}_0 \ls \abs{u_h}_{\cH^1(\T_h)} + \bigg( \sum_{z\in\N_h} \sum_{j=1}^{n_z-1} |u_h(z,\tau_{z,j+1})-u_h(z,\tau_{z,j})|^2 \bigg)^{1/2}. }
\end{lemma}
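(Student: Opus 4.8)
The plan is to reduce everything to the continuous recovery operator $\tilde{G}_h$ acting on $\tilde{u}_h$, and then to control $\tilde{u}_h$ by $u_h$ up to the nodal jump term. First I would invoke the $L^2$-stability of PPR over the continuous piecewise linear space: since the local least-squares fit reproduces constants exactly, the recovered nodal gradient $\tilde{G}_hv(z)$ depends only on the differences $v(z_j)-v(z)$ of nearby nodal values, so a standard scaling argument on the element patch $\om_z$ together with the finite overlap of patches yields $\norm{\tilde{G}_hv}_0\ls\abs{v}_{H^1(\Om)}$ for all $v\in\tilde{V}_h$ (this is the well-known boundedness of PPR). Because $G_hu_h=\tilde{G}_h\tilde{u}_h$ and $\tilde{u}_h$ is continuous, so that its global and broken seminorms coincide, this already gives $\norm{G_hu_h}_0\ls\abs{\tilde{u}_h}_{\cH^1(\T_h)}$, and it remains to bound $\abs{\tilde{u}_h}_{\cH^1(\T_h)}$.

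Next I would compare $\tilde{u}_h$ with $u_h$ element by element. On each $\tau\in\T_h$ both $u_h|_\tau$ and $\tilde{u}_h|_\tau$ are affine, so their difference $w_\tau:=\tilde{u}_h|_\tau-u_h|_\tau$ is the affine function with nodal values $w_\tau(z)=\tilde{u}_h(z)-u_h(z,\tau)$. Writing $w_\tau=\sum_z w_\tau(z)\phi_z$ in terms of the hat functions $\phi_z$ and using $\abs{\phi_z}_{H^1(\tau)}\ls1$ on a shape-regular triangle, I obtain $\abs{w_\tau}_{H^1(\tau)}^2\ls\sum_z|\tilde{u}_h(z)-u_h(z,\tau)|^2$, with the sum taken over the three vertices $z$ of $\tau$. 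Then the triangle inequality gives $\abs{\tilde{u}_h}_{\cH^1(\T_h)}\le\abs{u_h}_{\cH^1(\T_h)}+\big(\sum_\tau\abs{w_\tau}_{H^1(\tau)}^2\big)^{1/2}$, so the task reduces to estimating each nodal defect $\tilde{u}_h(z)-u_h(z,\tau)$.

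The key step is to express these defects through consecutive jumps. Using $\sum_{j=1}^{n_z}\la_j=1$ with $\la_j\ge0$, I would write
\eqn{ \tilde{u}_h(z)-u_h(z,\tau)=\sum_{j=1}^{n_z}\la_j\big(u_h(z,\tau_{z,j})-u_h(z,\tau)\big), }
a convex combination, and then telescope each $u_h(z,\tau_{z,j})-u_h(z,\tau)$ as a sum of at most $n_z-1$ consecutive differences $u_h(z,\tau_{z,m+1})-u_h(z,\tau_{z,m})$ along the counterclockwise ordering of the elements around $z$. Since $n_z$ is uniformly bounded for a regular triangulation, Cauchy--Schwarz gives $|\tilde{u}_h(z)-u_h(z,\tau)|^2\ls\sum_{m=1}^{n_z-1}|u_h(z,\tau_{z,m+1})-u_h(z,\tau_{z,m})|^2$. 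Summing over all triangles and their vertices, reorganizing the sum by node, and absorbing the bounded multiplicity $n_z=O(1)$, I obtain $\sum_\tau\abs{w_\tau}_{H^1(\tau)}^2\ls\sum_{z\in\N_h}\sum_{j=1}^{n_z-1}|u_h(z,\tau_{z,j+1})-u_h(z,\tau_{z,j})|^2$. Combining with the first two steps and taking square roots completes the argument.

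I expect the genuine content to sit in two places: the $L^2$-stability of $\tilde{G}_h$, which hinges on PPR annihilating constants so that the recovered gradient sees only nodal differences, and which I would cite as a standard PPR property rather than rederive; and the convex-combination/telescoping bookkeeping that converts the weighted nodal values defining $\tilde{u}_h$ into a clean sum of consecutive jumps with $h$-independent constants. The remaining estimates are routine scaling arguments for affine functions on shape-regular elements, so the main obstacle is really ensuring that the telescoping respects the cyclic ordering of $\tau_{z,1},\dots,\tau_{z,n_z}$ and that the bounded-overlap counting produces exactly the jump sum appearing in the statement.
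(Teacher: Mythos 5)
Your proposal follows essentially the same route as the paper's proof: stability of $\tilde{G}_h$ on $\tilde{V}_h$ reduces the bound to $\abs{\tilde{u}_h}_{\cH^1(\T_h)}$, the element-wise nodal-basis expansion of $\tilde{u}_h-u_h$ on shape-regular triangles reduces that to the nodal defects $\tilde{u}_h(z)-u_h(z,\tau)$, and the convex-combination plus telescoping argument (with $n_z=O(1)$) converts these into the sum of consecutive jumps. The argument is correct and matches the paper step for step.
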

\begin{proof}
We have the property $\norm{\tilde G_h\tilde v_h}_0\ls \norm{\na\tilde v_h}_0$ for $\tilde v_h\in \tilde V_h$ (cf. \cite{nz04}), which implies
\eqn{ \norm{G_hu_h}_0 = \norm{\tilde{G}_h\tuh}_0 \ls \norm{\na\tuh}_0 \ls \abs{\tuh-u_h}_{\cH^1(\T_h)} + \abs{u_h}_{\cH^1(\T_h)}.}
For any $\tau\in\T_h$, let $\phi_{\tau,1},\ \phi_{\tau,2}$ and $\phi_{\tau,3}$ be its node bases and let $z_{\tau,1},\ z_{\tau,2},\ z_{\tau,3}\in\N_h$ be its three vertices satisfying $\phi_{\tau,i}(z_{\tau,j})=\delta(i-j)$. We have
\eqn{ &\abs{\tuh-u_h}_{\cH^1(\T_h)} = \bigg( \sum_{\tau\in\T_h} \norm{\na(\tuh-u_h)}_{L^2(\tau)}^2 \bigg)^{1/2} \\
&= \bigg( \sum_{\tau\in\T_h}  \int_\tau \big| \sum_{j=1}^3 (\tuh(z_{\tau,j})-u_h(z_{\tau,j},\tau))\pa_x\phi_{\tau,j} \big|^2  \\
&\qquad +  \int_\tau \big| \sum_{j=1}^3 (\tuh(z_{\tau,j})-u_h(z_{\tau,j},\tau))\pa_y\phi_{\tau,j} \big|^2 \bigg)^{1/2}  \\
&\leq \bigg( \sum_{\tau\in\T_h}  \big(\sum_{j=1}^3 |\tuh(z_{\tau,j})-u_h(z_{\tau,j},\tau)|^2\big) \cdot \big(\sum_{j=1}^3\int_\tau|\pa_x\phi_{\tau,j}|^2+|\pa_y\phi_{\tau,j}|^2\big)  \bigg)^{1/2}.  }

Since $\T_h$ is a uniform regular triangulation, it is well known that $\sum_{j=1}^3\int_\tau|\pa_x\phi_{\tau,j}|^2+|\pa_y\phi_{\tau,j}|^2$ can be bounded by some constant $C$ independent of the mesh size $h$ and the triangle $\tau$ for any $\tau\in\T_h$. Therefore, we have

\eqn{ \abs{\tuh-u_h}_{\cH^1(\T_h)} &\ls \bigg( \sum_{\tau\in\T_h}  \big(\sum_{j=1}^3 |\tuh(z_{\tau,j})-u_h(z_{\tau,j},\tau)|^2\big)  \bigg)^{1/2}  \\
&\ls \bigg( \sum_{z\in\N_h} \sum_{j=1}^{n_z} |\tuh(z)-u_h(z,\tau_{z,j})|^2 \bigg)^{1/2}.  }
From the definition of $\tuh$, we have for any $z\in\N_h$,
\eqn{ &\sum_{j=1}^{n_z} |\tuh(z)-u_h(z,\tau_{z,j})|^2 = \sum_{j=1}^{n_z} | \sum_{i\neq j} \la_i(u_h(z,\tau_{z,i})-u_h(z,\tau_{z,j})) |^2 \\
&\leq \sum_{j=1}^{n_z} \big( \sum_{i\neq j} \la_i\abs{u_h(z,\tau_{z,i})-u_h(z,\tau_{z,j})} \big)^2 \\
&\leq \sum_{j=1}^{n_z} (n_z-1)\sum_{i\neq j} \la_i^2\abs{u_h(z,\tau_{z,i})-u_h(z,\tau_{z,j})}^2  \\
&\leq (n_z-1) \sum_{j=1}^{n_z-1} \sum_{i=j+1}^{n_z} (\la_i^2+\la_j^2)\abs{u_h(z,\tau_{z,i})-u_h(z,\tau_{z,j})}^2  \\
&\leq (n_z-1) \sum_{j=1}^{n_z-1} \sum_{i=j+1}^{n_z} (\la_i^2+\la_j^2) \big(\sum_{t=j}^{i-1}\abs{u_h(z,\tau_{z,t+1})-u_h(z,\tau_{z,t})}\big)^2 \\
&\leq (n_z-1)^2 \sum_{j=1}^{n_z-1} \sum_{i=j+1}^{n_z} \sum_{t=i}^{j-1} (\la_i^2+\la_j^2) \abs{u_h(z,\tau_{z,t+1})-u_h(z,\tau_{z,t})}^2 \\
&\leq (n_z-1)^2 \sum_{t=1}^{n_z-1} \big((n_z-t)\sum_{i=1}^t \la_i^2+t\sum_{i=t+1}^{n_z}\la_i^2\big) \abs{u_h(z,\tau_{z,t+1})-u_h(z,\tau_{z,t})}^2 \\
&\leq (n_z-1)^3 \sum_{t=1}^{n_z-1} \abs{u_h(z,\tau_{z,t+1})-u_h(z,\tau_{z,t})}^2. }
Since $n_z$ can be bounded by a constant independent of $h$ and the vertex $z$, we complete the proof.
\end{proof}

\begin{lemma}\label{ghphi}
For any element $\tau\in\T_h$ and any function $\phi\in H^3(\tilde\tau)$,
\eq{ \norm{G_h\phi_I-\na \phi}_{L^2(\tau)}\ls h^2\norm{\phi}_{H^3(\tilde\tau)}, }
where $\tilde\tau=\bigcup\set{\om_z:z\in\N_h\cap\tau}$ and $\phi_I$ is the linear interpolant of $\phi$.
\end{lemma}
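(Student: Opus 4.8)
The plan is to reduce the statement to the classical superconvergence property of the polynomial preserving recovery on the \emph{continuous} linear finite element space, and then close the estimate with a Bramble--Hilbert argument on the patch $\tilde\tau$. First I would note that, since $\phi\in H^3(\tilde\tau)\subset C(\tilde\tau)$ in two dimensions, the nodal values of $\phi$ are single-valued, so its linear interpolant $\phi_I$ is continuous, i.e. $\phi_I\in\tilde V_h\subset V_h$. Because the averaging weights in the definition of $G_h$ satisfy $\la_1+\cdots+\la_{n_z}=1$, the continuous piecewise linear function built from $\phi_I$ in that construction equals $\phi_I$ itself, whence $G_h\phi_I=\tilde G_h\phi_I$. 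It therefore suffices to bound $\norm{\tilde G_h\phi_I-\na\phi}_{L^2(\tau)}$, the standard PPR recovery error on $\tilde V_h$.

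Two properties of $\tilde G_h$ drive the argument. (i) \emph{Polynomial preservation}: for every $q\in P_2$ one has $\tilde G_h q_I=\na q$. Indeed, at each node $z$ the least-squares quadratic fit to the nodal data $\{q_I(z_j)\}=\{q(z_j)\}$ reproduces $q$ exactly, so the nodal value $\tilde G_h q_I(z)=(\na q)(z)$; since $\na q$ is affine, its continuous piecewise linear interpolant is $\na q$ itself (cf. \cite{zn05,nz04}). (ii) \emph{Local boundedness}: as $\tilde G_h\tilde v_h|_\tau$ depends only on nodal values in $\tilde\tau$, the global bound $\norm{\tilde G_h\tilde v_h}_0\ls\norm{\na\tilde v_h}_0$ used in the previous lemma localizes to $\norm{\tilde G_h\tilde v_h}_{L^2(\tau)}\ls\norm{\na\tilde v_h}_{L^2(\tilde\tau)}$. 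Taking $q\in P_2$ to be the averaged Taylor polynomial of degree $2$ of $\phi$ over $\tilde\tau$, property (i) lets me split
\[ \tilde G_h\phi_I-\na\phi=\tilde G_h(\phi-q)_I+(\tilde G_h q_I-\na q)+\na(q-\phi)=\tilde G_h(\phi-q)_I+\na(q-\phi). \]

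For the second term, the Bramble--Hilbert lemma gives $\norm{\na(q-\phi)}_{L^2(\tau)}=\abs{\phi-q}_{H^1(\tilde\tau)}\ls h^2\abs{\phi}_{H^3(\tilde\tau)}$. For the first term, (ii) bounds it by $\norm{\na(\phi-q)_I}_{L^2(\tilde\tau)}$; writing $\na(\phi-q)_I=\na[(\phi-q)_I-(\phi-q)]+\na(\phi-q)$ and combining the standard interpolation estimate with Bramble--Hilbert gives $\norm{\na[(\phi-q)-(\phi-q)_I]}_{L^2(\tilde\tau)}\ls h\abs{\phi-q}_{H^2(\tilde\tau)}\ls h^2\abs{\phi}_{H^3(\tilde\tau)}$ and $\norm{\na(\phi-q)}_{L^2(\tilde\tau)}\ls h^2\abs{\phi}_{H^3(\tilde\tau)}$. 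Adding the two contributions yields the claimed bound $h^2\norm{\phi}_{H^3(\tilde\tau)}$.

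The main obstacle is bookkeeping rather than a genuine difficulty: one must (a) verify that the polynomial-preservation identity $\tilde G_h q_I=\na q$ truly holds for the chosen sampling geometry (this is exactly where the $n\geq6$ sampling points and the exactness of the quadratic least-squares fit enter), and (b) carry out the Bramble--Hilbert scaling on $\tilde\tau$ so that every error term genuinely appears at order $h^2$, which relies on the mesh regularity $h_\tau\eqsim h$ to control the shape and diameter of $\tilde\tau$ uniformly. With these in hand, the estimate follows from the triangle inequality applied to the displayed splitting.
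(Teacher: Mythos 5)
Your proof is correct and takes essentially the same route as the paper: the paper's entire proof consists of the observation that $G_h\phi_I=\tilde G_h\phi_I$ (valid because $\phi_I$ is continuous and the weights $\la_j$ sum to one) followed by a citation of the standard PPR consistency estimate for the continuous operator $\tilde G_h$ (its ``Lemma 4.1'' in an unresolved reference). You perform the identical reduction and then supply the proof of the cited estimate yourself, via polynomial preservation, local boundedness of $\tilde G_h$, and a Bramble--Hilbert argument on $\tilde\tau$ --- which is the standard argument from the PPR literature --- so your write-up is a correct and more self-contained version of what the paper does.
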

\begin{proof}
The proof is completed by the fact that $G_hu_I=\tilde G_hu_I$ and Lemma~4.1 in ??.
\end{proof}

Since $u_I$ is continuous, that is
\eqn{ \tilde{u}_I(z) - u_I(z,\tau_{z,j}) = 0\quad \forall z\in\N_h\ \mathrm{and}\ j=1,2,\cdots,n_z, }
we have
\eq{ &\norm{G_hu_h-\na u}_0 \leq \norm{G_h(u_h-u_I)}_0 + \norm{G_hu_I-\na u}_0 \label{eq_th_a}\\
&\ls \abs{u_h-u_I}_{\cH^1(\T_h)} + \norm{G_hu_I-\na u}_0 + \bigg( \sum_{z\in\N_h} \sum_{j=1}^{n_z} |u_h(z)-u_h(z,\tau_{z,j})|^2 \bigg)^{1/2}. \nn }
Then by combining Lemmas~\ref{uui}--\ref{ghphi} and the inequality \eqref{eq_th_a}, we have the following theorem which is our main result in the paper.
\begin{theorem} \label{main1}
Let $u$ and $u_h$ be the solutions to \eqref{eq1.1a}--\eqref{eq1.1b} and the discrete solution, respectively.
Assume that $\T_h$ satisfies the \emph{mesh condition $\al$}.
Then there exists a constant $C_0$ independent of $k$ and $h$ such that if $k(kh)^2\leq C_0$,
\eq{ \norm{G_hu_h-\na u}_0\ls \left(kh^{1+\al}+kh^{1+\mu/2}+k(kh)^2\right) C_{u,g}. \label{ghpoll} }
\end{theorem}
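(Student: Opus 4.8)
The plan is to avoid touching the sesquilinear form again and instead assemble the theorem directly from the three-term split already recorded in \eqref{eq_th_a}. That inequality bounds $\norm{G_hu_h-\na u}_0$ by (i) the broken gradient error $\abs{u_h-u_I}_{\cH^1(\T_h)}$, (ii) the pure recovery error $\norm{G_hu_I-\na u}_0$ on the interpolant, and (iii) the vertex-discrepancy sum $\big(\sum_{z\in\N_h}\sum_{j=1}^{n_z}|\tuh(z)-u_h(z,\tau_{z,j})|^2\big)^{1/2}$. Each of these has already been estimated in isolation, so the task reduces to inserting the three bounds and checking that the slowest-decaying contributions match the claimed right-hand side.

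For term (i), I note $\abs{u_h-u_I}_{\cH^1(\T_h)}\le\norme{u_h-u_I}_{1,h}$, so Theorem~\ref{uhui} gives immediately the bound $(kh^{1+\al}+kh^{1+\mu/2}+k(kh)^2)C_{u,g}$, which is exactly the target rate. For term (ii), I would square the local estimate of Lemma~\ref{ghphi}, sum over $\tau\in\T_h$, and use the finite-overlap property of the patches $\tilde\tau$ on a regular mesh to collapse $\sum_\tau\norm{u}_{H^3(\tilde\tau)}^2$ into $\norm{u}_3^2$; this yields $\norm{G_hu_I-\na u}_0\ls h^2\norm{u}_3$. The $k$-dependence is then recovered from the definition of $C_{u,g}$, which forces $\norm{u}_3\ls k^2C_{u,g}$, so that term (ii) is $\ls (kh)^2C_{u,g}\ls k(kh)^2C_{u,g}$ using $k\gg1$. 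For term (iii), the first lemma of Section~\ref{sec_ppr} reduces $\sum_j|\tuh(z)-u_h(z,\tau_{z,j})|^2$ to the consecutive-vertex jumps, and the lemma following Theorem~\ref{uhui} bounds those by $h^{\mu/2}(kh^{1+\al}+kh^{1+\mu/2}+k(kh)^2)C_{u,g}$; since $h^{\mu/2}\le1$ for $h\le1$ and $\mu\ge0$, this contribution is of the same order as (i), indeed smaller, and is absorbed into it.

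Adding the three bounds gives \eqref{ghpoll}. The dominant pieces are (i) and (ii): term (i) supplies the superconvergent powers $kh^{1+\al}$ and $kh^{1+\mu/2}$, while term (ii) supplies the pollution-type term $k(kh)^2$ through the $H^3$-regularity of $u$. Since the substantive analytic work, namely the identity \eqref{fdmtliequ}, the boundary estimates for $I_{2,1}$ through $I_{2,3}$ in Lemma~\ref{uui}, and the energy estimate of Theorem~\ref{uhui}, is already in place, the only delicate bookkeeping at this final stage is the tracking of $k$-powers: specifically, confirming that the interpolation-recovery term $(kh)^2C_{u,g}$ is genuinely dominated by $k(kh)^2C_{u,g}$ under $k\gg1$, and that the finite-overlap constant for the patches $\tilde\tau$ is independent of $h$. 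I expect no essential obstacle beyond this arithmetic, as the theorem is an assembly of the preceding estimates.
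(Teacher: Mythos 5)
Your proposal is correct and follows essentially the same route as the paper, whose entire proof of Theorem~\ref{main1} consists of the single sentence ``combining Lemmas~\ref{uui}--\ref{ghphi} and the inequality \eqref{eq_th_a}'' --- i.e.\ exactly the three-term assembly you carry out, with term (i) from Theorem~\ref{uhui}, term (ii) from Lemma~\ref{ghphi} plus the definition of $C_{u,g}$, and term (iii) from the vertex-jump lemma. Your explicit bookkeeping of the $k$-powers (in particular $(kh)^2C_{u,g}\ls k(kh)^2C_{u,g}$ for $k\gg1$) and of the finite-overlap constant is exactly what the paper leaves implicit.
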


\section{The influence of the operator $G_h$ to the pollution error} \label{sec_ppr_nabla}
In this section, we estimate the error between $G_hu_h$ and $\na u_h$, which motivate us to combine the Richardson extrapolation and the ppr technique to reduce the numerical errors, and define the a posterior estimator in Section~\ref{sec_num}. 

First, we define an elliptic projection from $V$ to $V_h$:
find $u_h^+\in V_h$ such that
\eq{ a_h(u_h^+,v_h)+\i k\pd{u_h^+,v_h}=a_h(u,v_h)+\i k\pd{u,v_h}\quad \forall v_h\in V_h.\label{ellpro}}
In other words, the elliptic projection $u_h^+$ of $u$ is the finite element approximation
to the solution of the following (complex-valued) Poisson problem:
\eq{ -\De u &= F \quad \mathrm{in} \quad \Om,\label{eq2.2a}\\
\frac{\pa u}{\pa n} + \i ku &= g \quad \mathrm{on}\quad \Ga,\label{eq2.2b}  }
for some given function $F$ which are determined by $u$.
This kind of elliptic projection is often used to study some properties,
such as stability and convergence, of the FEM for the Helmholtz problem.  Readers are referred to \cite{zw,zd,dzh,dw}.

\begin{lemma} \label{ellemm}
Assume that $u$ is $H^2$-regular. $u_h^+$ is its elliptic projection defined by \eqref{ellpro}.
There hold the following estimates:
\eqn{ &\norm{u-u_h^+}_{1,h}\ls \inf_{v_h\in V_h} \norme{u-v_h}_{1,h} \\
&\norm{u-u_h^+}_0\ls h \inf_{v_h\in V_h} \norme{u-v_h}_{1,h}.}
\end{lemma}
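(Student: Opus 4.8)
The plan is to read \eqref{ellpro} as a Galerkin problem for a coercive (non-Hermitian) sesquilinear form and to run the classical C\'ea and Aubin--Nitsche arguments; the only nonstandard feature is the Robin term $\i k\pd{\cdot,\cdot}$, which I handle by exploiting the symmetry of $a_h$. Abbreviate $b_h(w,v):=a_h(w,v)+\i k\pd{w,v}$, so that \eqref{ellpro} reads $b_h(u_h^+,v_h)=b_h(u,v_h)$ for all $v_h\in V_h$, giving the Galerkin orthogonality $b_h(u-u_h^+,v_h)=0$.

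First I would establish coercivity and continuity of $b_h$ in the norm $\norm{v}_{1,h}^2+k\norm{v}_{L^2(\Ga)}^2$. Because the symmetric DG form $a_h$ is Hermitian, $a_h(v_h,v_h)$ is real, so $\Re b_h(v_h,v_h)=a_h(v_h,v_h)$ and $\Im b_h(v_h,v_h)=k\norm{v_h}_{L^2(\Ga)}^2$. Since both are nonnegative, adding them and invoking the coercivity of $a_h$ from Lemma~\ref{lemma_norms_equal} (for penalty $\rho_{0,e}\ge\underline\rho$) yields
\eqn{ \norm{v_h}_{1,h}^2+k\norm{v_h}_{L^2(\Ga)}^2\ls\abs{b_h(v_h,v_h)}\qquad\forall v_h\in V_h. }
It is precisely this Hermitian structure that lets the real and imaginary parts jointly control both the energy norm and the boundary norm. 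Continuity $\abs{b_h(w,v)}\ls(\norm{w}_{1,h}^2+k\norm{w}_{L^2(\Ga)}^2)^{1/2}(\norm{v}_{1,h}^2+k\norm{v}_{L^2(\Ga)}^2)^{1/2}$ then follows from the $a_h$-continuity of Lemma~\ref{lemma_norms_equal} together with Cauchy--Schwarz on $\Ga$.

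Coercivity, continuity, and Galerkin orthogonality give a C\'ea estimate: with $e:=u-u_h^+$,
\eqn{ \norm{e}_{1,h}^2+k\norm{e}_{L^2(\Ga)}^2\ls\inf_{v_h\in V_h}(\norm{u-v_h}_{1,h}^2+k\norm{u-v_h}_{L^2(\Ga)}^2). }
To reach the stated right-hand side I must replace $k\norm{u-v_h}_{L^2(\Ga)}^2$ by $\norme{u-v_h}_{1,h}^2$; I would do this with the multiplicative trace inequality $k\norm{w}_{L^2(\Ga)}^2\ls kh^{-1}\norm{w}_0^2+k\norm{w}_0\abs{w}_{\cH^1}$ applied to $w=u-v_h$, the second term being absorbed by $k^2\norm{w}_0^2+\abs{w}_{\cH^1}^2$. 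The surviving factor $kh^{-1}\norm{w}_0^2$ is harmless for the natural competitor $v_h=u_I$: then the jumps vanish, so $J_0(u-u_I,u-u_I)=0$, and $k\norm{u-u_I}_{L^2(\Ga)}^2\ls kh^3\abs{u}_{H^2(\Ga)}^2\ls\norme{u-u_I}_{1,h}^2$ once $kh\ls1$. This boundary/trace conversion is the main obstacle, and it is where the smallness of $kh$ is used.

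For the $L^2$-estimate I would use duality against the adjoint Poisson--Robin problem $-\De\psi=e$ in $\Om$, $\pa_n\psi-\i k\psi=0$ on $\Ga$, characterized by $b_h(v,\psi)=(v,e)$ for all $v$; the star-shapedness of $\Om$ gives $H^2$-regularity $\norm{\psi}_2\ls\norm{e}_0$. Taking $v=e$, subtracting the interpolant $\psi_I\in V_h$ via Galerkin orthogonality, and using continuity,
\eqn{ \norm{e}_0^2=b_h(e,\psi)=b_h(e,\psi-\psi_I)\ls(\norm{e}_{1,h}^2+k\norm{e}_{L^2(\Ga)}^2)^{1/2}\,h\norm{\psi}_2, }
where $(\norm{\psi-\psi_I}_{1,h}^2+k\norm{\psi-\psi_I}_{L^2(\Ga)}^2)^{1/2}\ls h\norm{\psi}_2$ by interpolation and the same trace estimate (for $kh\ls1$). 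Cancelling one factor of $\norm{e}_0$ and inserting the energy bound from the previous step yields $\norm{e}_0\ls h\inf_{v_h}\norme{u-v_h}_{1,h}$, which is the second assertion.
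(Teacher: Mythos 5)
Your overall strategy is sound and, in fact, reproduces from scratch what the paper simply outsources: the paper's entire proof consists of citing Lemma~3.5 of the reference \cite{zd} for the two quasi-optimal estimates
\eqn{ \norm{u-u_h^+}_{1,h}\ls \inf_{v_h\in V_h}\big(\norm{u-v_h}_1^2+k\norm{u-v_h}_{L^2(\Ga)}^2\big)^{1/2},\qquad
\norm{u-u_h^+}_0\ls h\,\inf_{v_h\in V_h}\big(\norm{u-v_h}_1^2+k\norm{u-v_h}_{L^2(\Ga)}^2\big)^{1/2}, }
and then converting the boundary term. Your coercivity-via-real-and-imaginary-parts, C\'ea, and Aubin--Nitsche steps are precisely the content of that cited lemma, so up to that point you are on the paper's track (modulo the usual caveats about DG continuity on $E\times V_h$ and the $k$-independence of the $H^2$-regularity constant for the adjoint Robin problem, which both you and the paper leave to the reference).

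The genuine gap is in your final conversion of $k\norm{u-v_h}_{L^2(\Ga)}^2$ into $\norme{u-v_h}_{1,h}^2$. You use the element-wise trace inequality, which produces the term $kh^{-1}\norm{w}_0^2$; since $\norme{w}_{1,h}^2$ only controls $k^2\norm{w}_0^2$, absorbing $kh^{-1}\norm{w}_0^2$ would require $1\ls kh$, which fails in the regime of interest. You escape by specializing to the single competitor $v_h=u_I$ and assuming $kh\ls1$, but that proves only $\norm{u-u_h^+}_{1,h}\ls\norme{u-u_I}_{1,h}$, which does \emph{not} imply the stated bound by $\inf_{v_h\in V_h}\norme{u-v_h}_{1,h}$ (a bound by one particular competitor is weaker than a bound by the infimum), and it imports a mesh condition the lemma does not carry. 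The paper instead uses the \emph{global} multiplicative trace inequality on $\Om$,
\eqn{ k\norm{u-v_h}_{L^2(\Ga)}^2\ls k\norm{u-v_h}_0\norm{u-v_h}_{\cH^1(\Om)}\ls k^2\norm{u-v_h}_0^2+\norm{u-v_h}_{\cH^1(\Om)}^2\ls\norme{u-v_h}_{1,h}^2, }
where the weighted Young inequality splits the factor $k$ exactly so that both resulting terms sit inside $\norme{\cdot}_{1,h}^2$; this holds for every $v_h\in V_h$ and needs no smallness of $kh$. Replacing your element-wise trace step with this domain-level one repairs the argument completely.
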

\begin{proof}
From Lemma~3.5 in \cite{zd}, we know that
\eqn{
\norm{u-u_h^+}_{1,h} &\ls \inf_{v_h\in V_h} \big(\norm{u-v_h}_1^2+k\norm{u-v_h}_{L^2(\Ga)}^2\big)^{1/2},\\
\norm{u-u_h^+}_0 &\ls h \inf_{v_h\in V_h} \big(\norm{u-v_h}_1^2+k\norm{u-v_h}_{L^2(\Ga)}^2\big)^{1/2}.
}
Then the estimates follow from
\eqn{ k\norm{u-v_h}_{L^2(\Ga)}^2 &\ls k\norm{u-v_h}_0\norm{u-v_h}_{\cH^1(\Om)}\\
&\ls k^2\norm{u-v_h}_0^2 + \norm{u-v_h}_{\cH^1(\Om)}^2 \ls \norme{u-v_h}_{1,h}^2.
}
\end{proof}

\begin{lemma}\label{naphu}
Assume that $u$ is the exact solution to \eqref{eq1.1a}--\eqref{eq1.1b} and $u_h^+$ is its elliptic projection defined by \eqref{ellpro}.
Assume that $\T_h$ satisfies the \emph{mesh condition $\al$}. We have
\eq{
\norme{\na u_h^+-\na u_I}_{1,h} \ls \big(kh^{1+\al}+kh^{1+\mu/2}+(kh)^2\big) C_{u,g}. \label{eq:nabla-phu-ui}
}
\end{lemma}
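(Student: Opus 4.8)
Since the $\norme{\cdot}_{1,h}$-norm is defined for scalar functions, I read the left-hand side of \eqref{eq:nabla-phu-ui} as $\norme{u_h^+-u_I}_{1,h}$; this in particular controls the broken gradient error $\big(\sum_K\norm{\na(u_h^+-u_I)}_{L^2(K)}^2\big)^{1/2}$. The plan is to rerun the proof of Theorem~\ref{uhui} almost verbatim, replacing the Helmholtz Galerkin orthogonality by the \emph{elliptic-projection} orthogonality from \eqref{ellpro}; the gain is that this orthogonality carries no $-k^2$ term and no pollution, so the offending $k^2$-contribution improves from $k(kh)^2$ to $(kh)^2$. Set $v_h:=u_h^+-u_I\in V_h$ and assume $\underline{\rho}\le\rho_{0,e}$, so Lemma~\ref{lemma_norms_equal} gives $\norme{v_h}_{1,h}^2\ls a_h(v_h,v_h)+k^2(v_h,v_h)$. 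Because $a_h$ is Hermitian and $\pd{v_h,v_h}=\norm{v_h}_{L^2(\Ga)}^2$ is real, adding the purely imaginary $\i k\pd{v_h,v_h}$ leaves the right-hand side unchanged, and the orthogonality $a_h(u_h^+-u,v_h)+\i k\pd{u_h^+-u,v_h}=0$ lets me transfer the first group onto the interpolation error:
\eqn{\norme{v_h}_{1,h}^2 \ls \Re\big(a_h(u-u_I,v_h)+\i k\pd{u-u_I,v_h}\big)+k^2(v_h,v_h)=:J_1+J_2+J_3.}

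The terms $J_1=\Re\,a_h(u-u_I,v_h)$ and $J_2=\Re\big(\i k\pd{u-u_I,v_h}\big)$ are exactly the quantities $I_1$ and $I_3$ appearing in the proof of Theorem~\ref{uhui}, and the bounds obtained there use nothing about $v_h$ except that $v_h\in V_h$. Hence they transfer unchanged, and I would simply cite them: Lemma~\ref{uui} with the jump estimate \eqref{eq_I12} gives $J_1\ls\big((kh)^2+kh^{1+\al}+kh^{1+\mu/2}\big)\norme{v_h}_{1,h}C_{u,g}$, and the trace inequality gives $J_2\ls(kh)^2\norme{v_h}_{1,h}C_{u,g}$.

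The real work is $J_3=k^2\norm{v_h}_0^2$, which is where the pollution-free nature of the projection enters. I would write $k\norm{v_h}_0\le k\norm{u-u_h^+}_0+k\norm{u-u_I}_0$. Interpolation gives $\norme{u-u_I}_{1,h}\ls kh\,C_{u,g}$, so Lemma~\ref{ellemm} yields $\norm{u-u_h^+}_0\ls h\,\norme{u-u_I}_{1,h}\ls kh^2C_{u,g}$, i.e. $k\norm{u-u_h^+}_0\ls(kh)^2C_{u,g}$; likewise $k\norm{u-u_I}_0\ls kh^2\norm{u}_2\ls(kh)^2C_{u,g}$. Estimating one remaining factor by $k\norm{v_h}_0\le\norme{v_h}_{1,h}$ then gives $J_3\ls(kh)^2C_{u,g}\,\norme{v_h}_{1,h}$, crucially with no $k(kh)^2$ term. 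Collecting $J_1+J_2+J_3$ and dividing by $\norme{v_h}_{1,h}$ produces \eqref{eq:nabla-phu-ui}.

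I expect the only genuine point of care to be the $J_3$ step: one must use the elliptic-projection $L^2$ bound of Lemma~\ref{ellemm} rather than the Helmholtz preasymptotic bound of Lemma~\ref{lemma_pre_err}, since it is exactly the absence of pollution in the auxiliary problem \eqref{eq2.2a}--\eqref{eq2.2b} that replaces $k(kh)^2$ by $(kh)^2$. Everything else is bookkeeping inherited from Theorem~\ref{uhui}.
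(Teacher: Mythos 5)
Your proposal is correct and follows essentially the same route as the paper: same test function $v_h=u_h^+-u_I$, the coercivity of Lemma~\ref{lemma_norms_equal} plus the elliptic-projection orthogonality \eqref{ellpro}, reuse of the $I_1$ and $I_3$ bounds from Theorem~\ref{uhui} for the $a_h$ and boundary terms, and Lemma~\ref{ellemm} for the $k^2$-term — which, as you correctly identify, is exactly where the pollution term $k(kh)^2$ is replaced by $(kh)^2$. Your reading of the left-hand side as $\norme{u_h^+-u_I}_{1,h}$ also matches what the paper's own proof actually estimates.
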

\begin{proof}
Denote $v_h=u_h^+-u_I$. By the Galerkin orthogonality,
\eqn{ &\norme{\na u_h^+-\na u_I}_{1,h}^2 \ls\Re\big( a_h(u_h^+-u_I,v_h) + \i k\pd{u_h^+-u_I,v_h} \big)\\
&\qquad + k^2(u_h^+-u_I,v_h)\\
&\ls\Re\big( a_h(u-u_I,v_h) + \i k\pd{u-u_I,v_h} \big)  + k^2(u_h^+-u_I,v_h)\\
&\ls \abs{a_h(u-u_I,v_h)} + \abs{k\pd{u-u_I,v_h}} + k\norm{u_h^+-u_I}_0\cdot k\norm{v_h}_0. }
By some arguments same to those in \ref{uhui}, it is obtained that
\eqn{ \abs{a_h(u-u_I,v_h)} \ls \left((kh)^2+kh^{1+\al}+kh^{1+\mu/2}\right)\norme{v_h}_{1,h} C_{u,g}}
and
\eqn{\abs{k\pd{u-u_I,v_h}}\ls (kh)^2 \norme{v_h}_{1,h} C_{u,g}.}
From Lemma~\ref{ellemm}, we have
\eqn{ k\norm{u_h^+-u_I}_0\cdot k\norm{v_h}_0 &\leq \big(k\norm{u_h^+-u}_0+k\norm{u-u_I}_0\big)\norme{v_h}_{1,h}\\
&\ls (kh)^2\big(1+\sqrt{kh}\big)\norme{v_h}_{1,h} C_{u,g}. }
By combining the inequalities above, we complete the proof.
\end{proof}

\begin{theorem} \label{main2}
Let $u$ and $u_h^+$ be the solution to \eqref{eq1.1a}--\eqref{eq1.1b} and the elliptic projection defined by \eqref{ellpro}, respectively. Assume that the \emph{mesh condition $\al$} is satisfied.
Then the following error estimate holds:
\eq{ \norm{G_hu_h^+-\na u}_0\ls \big(kh^{1+\al}+kh^{1+\mu/2}+(kh)^2\big)  C_{u,g}. \label{phusup} }
\end{theorem}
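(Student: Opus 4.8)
The plan is to follow the route of the proof of Theorem~\ref{main1} essentially verbatim, the only---and decisive---change being that the pre-asymptotic estimate of Theorem~\ref{uhui} (which applies to the DG solution $u_h$ and carries the pollution term $k(kh)^2$) is replaced throughout by the pollution-free estimate of Lemma~\ref{naphu} (which applies to the elliptic projection $u_h^+$ and carries only $(kh)^2$). Concretely, since $G_h$ is linear and $u_I$ is the linear interpolant of $u$, I would start from the splitting
\eqn{ \norm{G_hu_h^+-\na u}_0 \leq \norm{G_h(u_h^+-u_I)}_0 + \norm{G_hu_I-\na u}_0, }
which isolates the recovery of the discrete error $u_h^+-u_I\in V_h$ from the consistency of $G_h$ on the interpolant.

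The second term is handled purely by approximation theory: applying Lemma~\ref{ghphi} on each $\tau$ and summing (the patches $\tilde\tau$ overlap only boundedly for a regular mesh) gives $\norm{G_hu_I-\na u}_0\ls h^2\norm{u}_{H^3(\Om)}$, and the embedded regularity bound $\norm{u}_3\ls k^2C_{u,g}$ turns this into $\ls(kh)^2C_{u,g}$. For the first term, I would apply the boundedness lemma for $G_h$ (valid for every element of $V_h$, hence for $u_h^+-u_I$) to obtain
\eqn{ \norm{G_h(u_h^+-u_I)}_0 &\ls \abs{u_h^+-u_I}_{\cH^1(\T_h)} \\
&\quad + \bigg( \sum_{z\in\N_h} \sum_{j=1}^{n_z-1} |(u_h^+-u_I)(z,\tau_{z,j+1})-(u_h^+-u_I)(z,\tau_{z,j})|^2 \bigg)^{1/2}. }
The broken seminorm is bounded trivially by $\abs{u_h^+-u_I}_{\cH^1(\T_h)}\le\norme{u_h^+-u_I}_{1,h}$. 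For the nodal-jump sum, the continuity of $u_I$ cancels its contribution, so the sum coincides with the one for $u_h^+$; repeating the argument of the (unlabeled) jump lemma preceding Section~\ref{sec_ppr}---estimate each nodal jump by the edge jump via \eqref{eq_p2e} and the inverse inequality, then recognize the weighted edge-jump sum as $h^{\mu/2}J_0(u_h^+-u_I,u_h^+-u_I)^{1/2}$---bounds it by $h^{\mu/2}\norme{u_h^+-u_I}_{1,h}$.

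Both contributions to the first term thus collapse onto $\norme{u_h^+-u_I}_{1,h}$, which Lemma~\ref{naphu} controls by $\big(kh^{1+\al}+kh^{1+\mu/2}+(kh)^2\big)C_{u,g}$; adding the second term gives the stated bound. I do not anticipate a genuine obstacle, since every ingredient is already in place---the work is organizational. The one point requiring care is the claim that the estimate for $u_h^+-u_I$ is free of pollution, which rests entirely on Lemma~\ref{naphu}: its proof departs from that of Theorem~\ref{uhui} only in the $k^2(u_h^+-u_I,v_h)$ term, where the best-approximation property of the elliptic projection (Lemma~\ref{ellemm}) furnishes the pollution-free bound $k\norm{u-u_h^+}_0\ls(kh)^2(1+\sqrt{kh})C_{u,g}$, whereas for the true Helmholtz solution the corresponding $L^2$ estimate (Lemma~\ref{lemma_pre_err}) unavoidably contains $k(kh)^2$.
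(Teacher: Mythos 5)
Your proposal is correct and follows exactly the route the paper intends: although the paper omits an explicit proof of Theorem~\ref{main2}, it is plainly meant to be obtained by repeating the decomposition \eqref{eq_th_a} used for Theorem~\ref{main1} with $u_h$ replaced by $u_h^+$ (using the boundedness lemma for $G_h$, the continuity of $u_I$, the jump-to-$J_0$ argument, and Lemma~\ref{ghphi}), with Lemma~\ref{naphu} invoked in place of Theorem~\ref{uhui}. Your observation that the pollution term disappears precisely because the elliptic projection satisfies the best-approximation $L^2$ bound of Lemma~\ref{ellemm} is exactly the point of the paper's construction.
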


\begin{theorem}\label{main3}
Let $u$ and $u_h^+$ be the solution to \eqref{eq1.1a}--\eqref{eq1.1b} and the elliptic projection defined by \eqref{ellpro}, respectively. Assume that the \emph{mesh condition $\al$} is satisfied. We have
\eq{ \bigg(\sum_{\tau\in\T_h}\norm{G_hu_h-\na u_h}_{L^2(\tau)}^2\bigg)^{1/2}\ls \big(kh+k(kh)^3\big) C_{u,g}. \label{eq:th2eq1} }
\end{theorem}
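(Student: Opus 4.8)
The plan is to peel off the pollution-free part of the estimator by inserting the elliptic projection $u_h^+$ from \eqref{ellpro} and writing (with $\na$ the broken gradient)
\[
G_h u_h - \na u_h = \big(G_h u_h^+ - \na u_h^+\big) + (G_h - \na)e_h,\qquad e_h:=u_h-u_h^+.
\]
For the first summand I would use $\norm{G_h u_h^+ - \na u_h^+}_0 \le \norm{G_h u_h^+ - \na u}_0 + \abs{u-u_h^+}_{\cH^1}$, bound the first piece by Theorem~\ref{main2}, and bound $\abs{u-u_h^+}_{\cH^1}\le\abs{u-u_I}_{\cH^1}+\norme{u_h^+-u_I}_{1,h}$ by interpolation and Lemma~\ref{naphu}. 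Since under $k(kh)^2\le C_0$ the orders $kh^{1+\al},\,kh^{1+\mu/2},\,(kh)^2$ are all $\ls kh$, this term is $\ls kh\,C_{u,g}$ and supplies the leading term in \eqref{eq:th2eq1}.

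Everything then reduces to the sharp estimate $\norm{(G_h-\na)e_h}_0\ls k(kh)^3 C_{u,g}$; note that the crude bound $\norm{(G_h-\na)e_h}_0\ls\norme{e_h}_{1,h}$ only yields the pollution order $k(kh)^2$ via Theorem~\ref{uhui} and Lemma~\ref{naphu}, which is one factor of $kh$ too weak. To gain this factor I would exploit that $e_h$ solves a \emph{coercive} problem: subtracting \eqref{ellpro} from \eqref{eq_DG-FEM} and using the variational identity of Remark~2.1 for $u$ gives
\[
a_h(e_h,v_h)+\i k\pd{e_h,v_h} = -k^2(u-u_h,v_h)\qquad\forall v_h\in V_h.
\]
Thus $e_h$ is exactly the DG elliptic projection of the function $e\in H^2(\Om)$ solving the Poisson--Robin problem \eqref{eq2.2a}--\eqref{eq2.2b} with $F=-k^2(u-u_h)$ and $g=0$.

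Writing $e_I$ for the linear interpolant of $e$, I would split
\[
\norm{(G_h-\na)e_h}_0 \le \norm{G_h e_I - \na e}_0 + \norm{G_h(e_h-e_I)}_0 + \norm{\na(e-e_h)}_0.
\]
The first term is a recovery consistency error; since $\tilde G_h$ reproduces gradients of linear polynomials, a Bramble--Hilbert argument gives $\norm{G_h e_I-\na e}_0\ls h\norm{e}_2$ (only $H^2$ is needed). The second is controlled by the boundedness of $G_h$ (the lemma bounding $\norm{G_h\cdot}_0$ together with the node-jump estimate) by $\norm{e_h-e_I}_{1,h}$, and the third equals $\abs{e-e_h}_{\cH^1}\le\norm{e-e_h}_{1,h}$. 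Because $e_h$ is the elliptic projection of $e$, C\'ea's lemma as in Lemma~\ref{ellemm} and interpolation give $\norm{e-e_h}_{1,h}\ls\norme{e-e_I}_{1,h}\ls h\norm{e}_2$, so all three pieces are $\ls h\norm e_2$. Finally $H^2$ regularity yields $\norm e_2\ls k^2\norm{u-u_h}_0$, and \eqref{ecor-1-b} gives $\norm{u-u_h}_0\ls(kh)^2C_{u,g}$; hence $h\norm e_2\ls h\,k^2(kh)^2C_{u,g}=k(kh)^3C_{u,g}$. Adding the two summands proves \eqref{eq:th2eq1}.

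The crux is the second summand: the whole improvement from $k(kh)^2$ to $k(kh)^3$ comes from viewing $e_h$ as the Galerkin approximation of the \emph{smooth} function $e$, so that $G_h-\na$ loses only the interpolation/recovery order $h\norm e_2$ instead of acting on a generic discrete function; the two extra powers of $k$ hidden in $\norm e_2$ are overwhelmed by the $(kh)^2$ smallness of $\norm{u-u_h}_0$. The points requiring care are that the $H^2$-regularity constant for the complex Robin problem and the node-jump control inside the boundedness of $G_h$ introduce only lower-order powers of $k$, which can be absorbed.
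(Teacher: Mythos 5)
Your proposal follows essentially the same route as the paper: the same splitting $u_h=u_h^+ +(u_h-u_h^+)$, the same use of Theorem~\ref{main2} and the elliptic projection error for the first piece, and the same key observation that $e_h=u_h-u_h^+$ is the DG approximation of a Poisson--Robin solution with data $-k^2(u-u_h)$, so that $h\norm{e}_2\ls k^2h\norm{u-u_h}_0$ combined with \eqref{ecor-1-b} yields the $k(kh)^3$ term. Your three-way split of $\norm{(G_h-\na)e_h}_0$ merely spells out the step the paper states as $\norm{G_h\ta_h-\na\ta}_0+\abs{\ta-\ta_h}_{\cH^1}\ls h\norm{\ta}_2$, so the two arguments coincide in substance.
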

\begin{proof}
Denote by $\ta_h=u_h-u_h^+$. $u_h$ can be written as $u_h=u_h^++\ta_h$, where $u_h^+$ is the elliptic projection of $u$ defined by \eqref{ellpro}. We have
\eq{ &\bigg(\sum_{\tau\in\T_h}\norm{G_hu_h-\na u_h}_{L^2(\tau)}^2\bigg)^{1/2} \label{main3eq1}\\
&= \bigg(\sum_{\tau\in\T_h}\norm{G_h(u_h^++\ta_h)-\na (u_h^++\ta_h)}_{L^2(\tau)}^2\bigg)^{1/2}\nn\\
&\leq \bigg(\sum_{\tau\in\T_h}\norm{G_hu_h^+-\na u_h^+}_{L^2(\tau)}^2\bigg)^{1/2} + \bigg(\sum_{\tau\in\T_h}\norm{G_h\ta_h-\na \ta_h}_{L^2(\tau)}^2\bigg)^{1/2}. \nn}
From Lemma~\ref{ellemm} and Theorem~\ref{main2},
\eq{ &\bigg(\sum_{\tau\in\T_h}\norm{G_hu_h^+-\na u_h^+}_{L^2(\tau)}^2\bigg)^{1/2} \label{main3eq2}\\
&\leq \norm{G_hu_h^+-\na u}_0 + \abs{u-u_h^+}_{\cH^1}\nn\\
&\ls \big(kh^{1+\al}+kh^{1+\mu/2}+(kh)^2\big)C_{u,g} + khC_{u,g} \nn\\
&\ls (kh+(kh)^2)C_{u,g}. \nn }
From \eqref{eq_DG-FEM} and \eqref{ellpro}, we have that $\ta_h$ satisfies the following equation:
\eq{ a_h(\ta_h,v_h) + \i k\pd{\ta_h,v_h} = - k^2(u-u_h,v_h). }
Therefor, $\ta_h$ can be understood as the numerical approximation to the following Poisson problem
with Robin boundary:
\eqn{ -\De \ta &= -k^2(u-u_h) \quad\mathrm{in} \quad \Om,\\
\frac{\pa \ta}{\pa n} + \i k\ta &= 0\quad\quad\quad\quad\qquad\mathrm{on}\quad \Ga.  }
Therefore,
\eq{ \bigg(\sum_{\tau\in\T_h}\norm{G_h\ta_h-\na \ta_h}_{L^2(\tau)}^2\bigg)^{1/2} &\leq \norm{G_h\ta_h-\na \ta}_0 +  \abs{\ta-\ta_h}_{\cH^1}\label{main3eq3}\\
&\ls h\norm{\ta}_2\ls k^2h\norm{u-u_h}_0\nn\\
&\ls k^2h(kh^2+k^2h^2)C_{u,g}\nn\\
&\ls ((kh)^3+k(kh)^3)C_{u,g}.\nn }
The proof is completed by combining \eqref{main3eq1}--\eqref{main3eq3}.
\end{proof}

\section{Numerical Tests}\label{sec_num}
In this section, some numerical tests are implemented in order to demonstrate our theoretical results. We simulate
the Helmholtz problem \eqref{eq1.1a}--\eqref{eq1.1b} where the source data $f$ and $g$ is so chosen that the
exact solution is
\eqn{ u=\frac{\cos(k r)}{r}-\frac{\cos k+\i\sin k}{k\big(J_0(k)+\i J_1(k)\big)}J_0(k r) }
in polar coordinates, where $J_0(z)$ is a Bessel function of the first kind, and $\Om=[0.5,1.5]\times[0.5,1.5]$.
In this section, let $\ga_0=5$ and we set $\la_1=1,\la_2=0,\cdots,\la_{n_z}=0$ in the definition of $G_h$.

We first simulate the problem over the regular pattern uniform triangulation and denote by $\T_N$ the triangulation consisting of $2N^2$ triangles of size $h$ which is equivalent to $1/N$.

From Theorem~\ref{uhui}, there exists the estimate that if $k(kh)^2\leq C_0$,
\eqn{ \norm{u_h-u_I}_{1,h} \ls kh^{1+\mu/2}+k(kh)^2. }
The left graphs in Figure~\ref{fig_error1}--Figure~\ref{fig_error3} show the numerical errors $\norm{u_h-u_I}_{1,h}$ with penalty parameters $\mu=0,1,2$ for $k=5,10,50$ and $100$, respectively. The right graphs in Figure~\ref{fig_error1}--Figure~\ref{fig_error3} show the convergence orders of the errors $\norm{u_h-u_I}_{1,h}$ shown in the left graphs, respectively. As we expected, $\norm{u_h-u_I}_{1,h}$ decays at the rates of $O(h^1)$, $O(h^{3/2})$ and $O(h^{2})$ for the small wave numbers $k=5,10$, respectively. However, we can see that for the large wave number $k=50,100$, $\norm{u_h-u_I}_{1,h}$ does not converge at first, then begins to decay at the rates which are greater than $O(h^{1+\mu/2})$ when $N$ is large enough, which implies the existence of the constraint $k(kh)^2\leq C_0$ and the so-called pollution error $k(kh)^2$.

Figure~\ref{fig_ppr1}--Figure~\ref{fig_ppr3} show the numerical errors $\norm{G_hu_h-\na u}_0$ in left graphs and the convergence order in right graphs for $k=5,10,50,100$ with penalty parameters $\mu=0,1,2$, respectively. Clearly, the recovered gradients super-converge at the rate greater than $O(h^{1+\mu/2})$. Therefore, whether the estimate~\eqref{ghpoll} is sharp with respect to $\mu$ is still open. The constraint $k(kh)^2\leq C_0$ and the so-called pollution error $k(kh)^2$ can also be observed.

\begin{figure}[htbp]
\begin{center}
\includegraphics[width=0.49\textwidth]{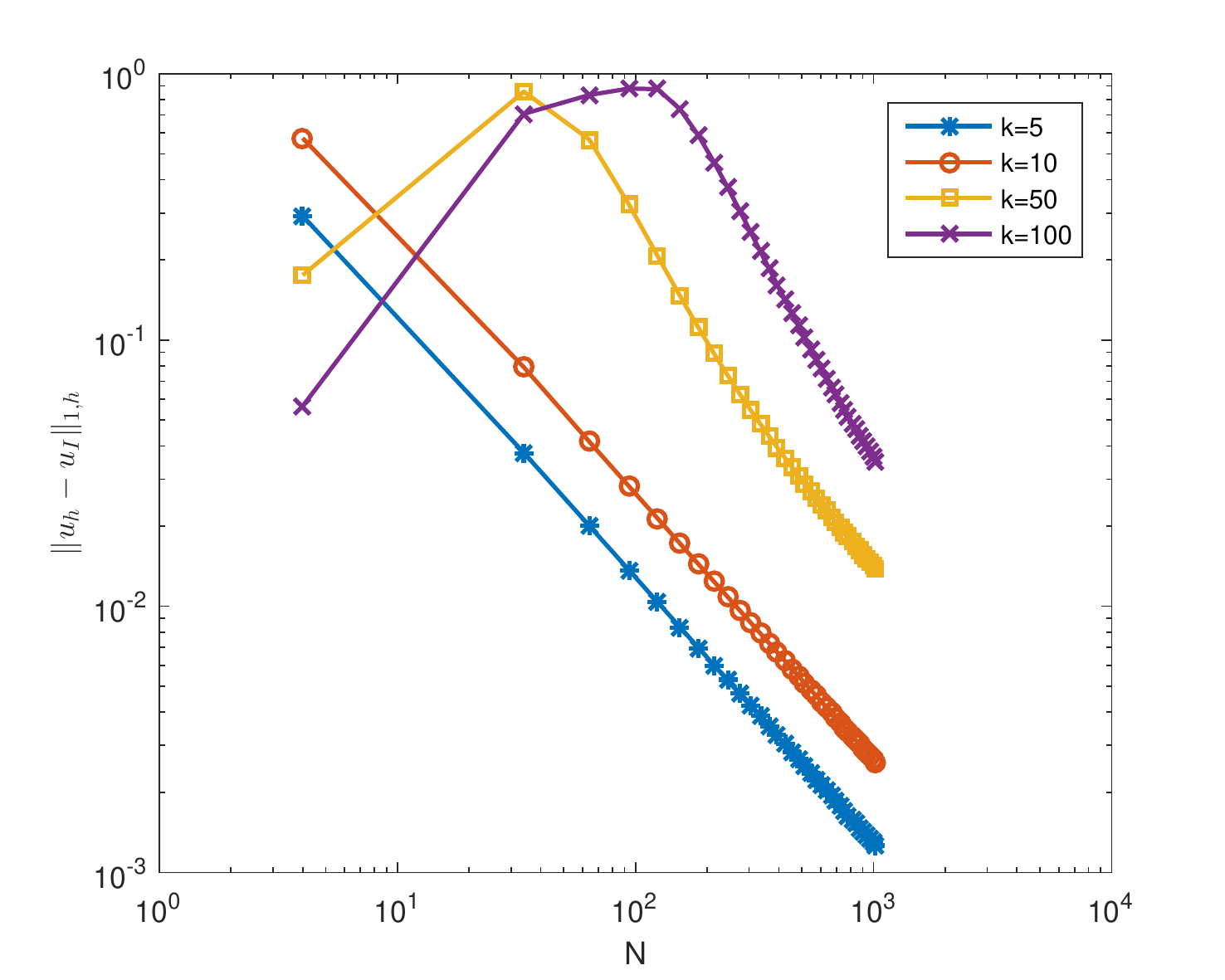}
\includegraphics[width=0.49\textwidth]{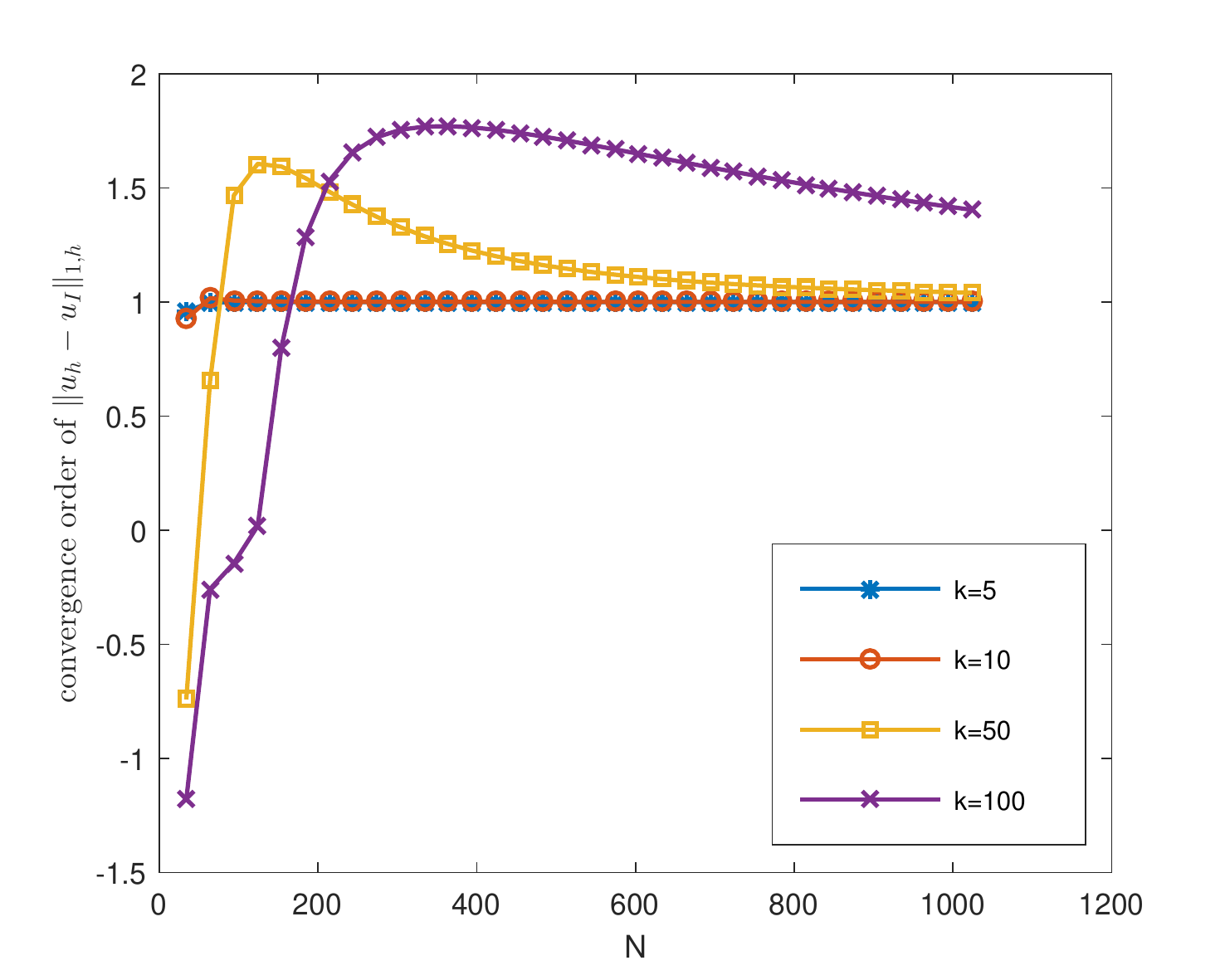}
\caption{ $\norm{u_h-u_I}_{1,h}$ (left) and the convergence order of $\norm{u_h-u_I}_{1,h}$ (right) for $k=5,10,50,100$, where $u_h$ is the numerical solution over the regular pattern uniform triangulation $\T_N$ with $\mu=0$. }
\label{fig_error1}
\end{center}
\end{figure}

\begin{figure}[htbp]
\begin{center}
\includegraphics[width=0.49\textwidth]{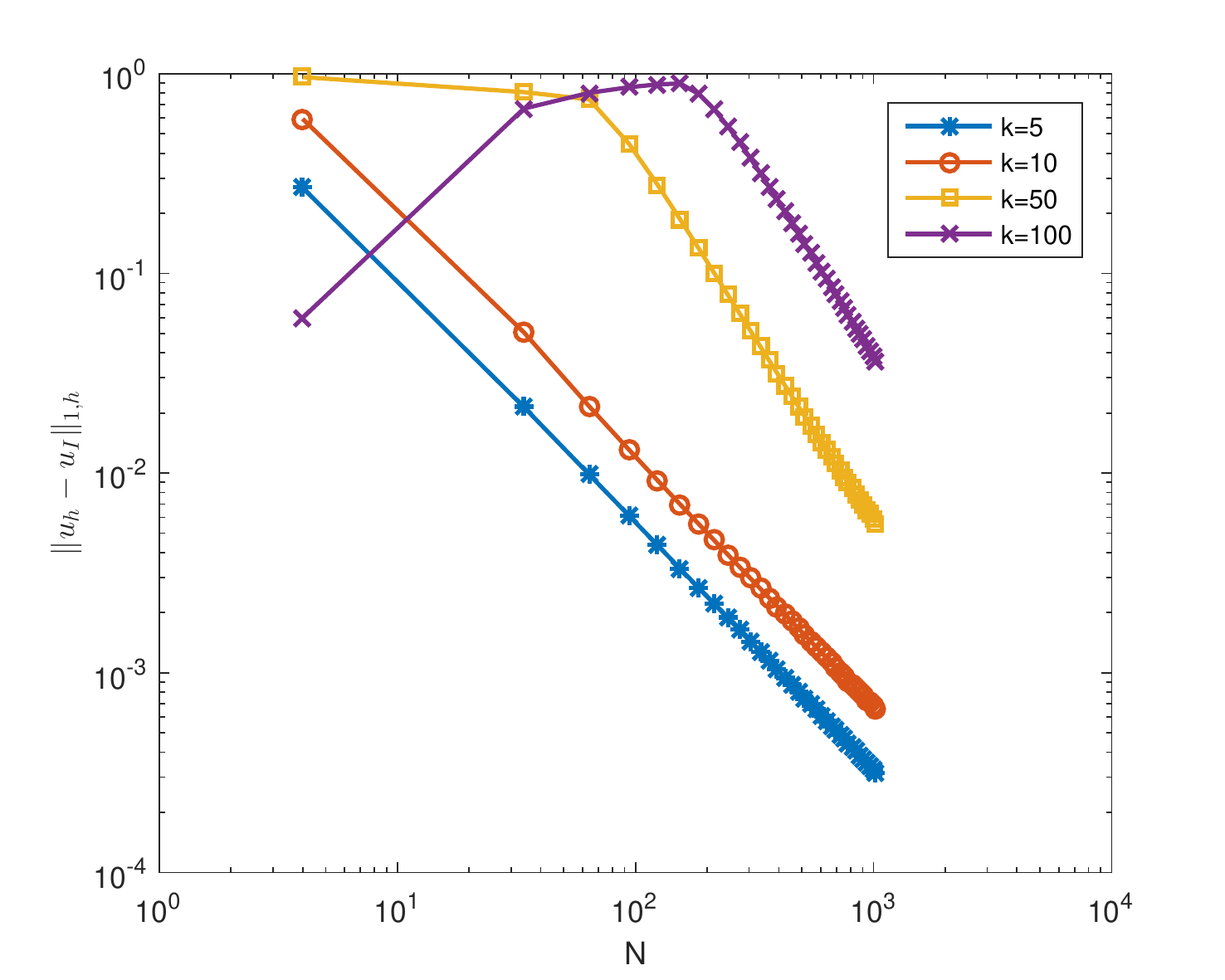}
\includegraphics[width=0.49\textwidth]{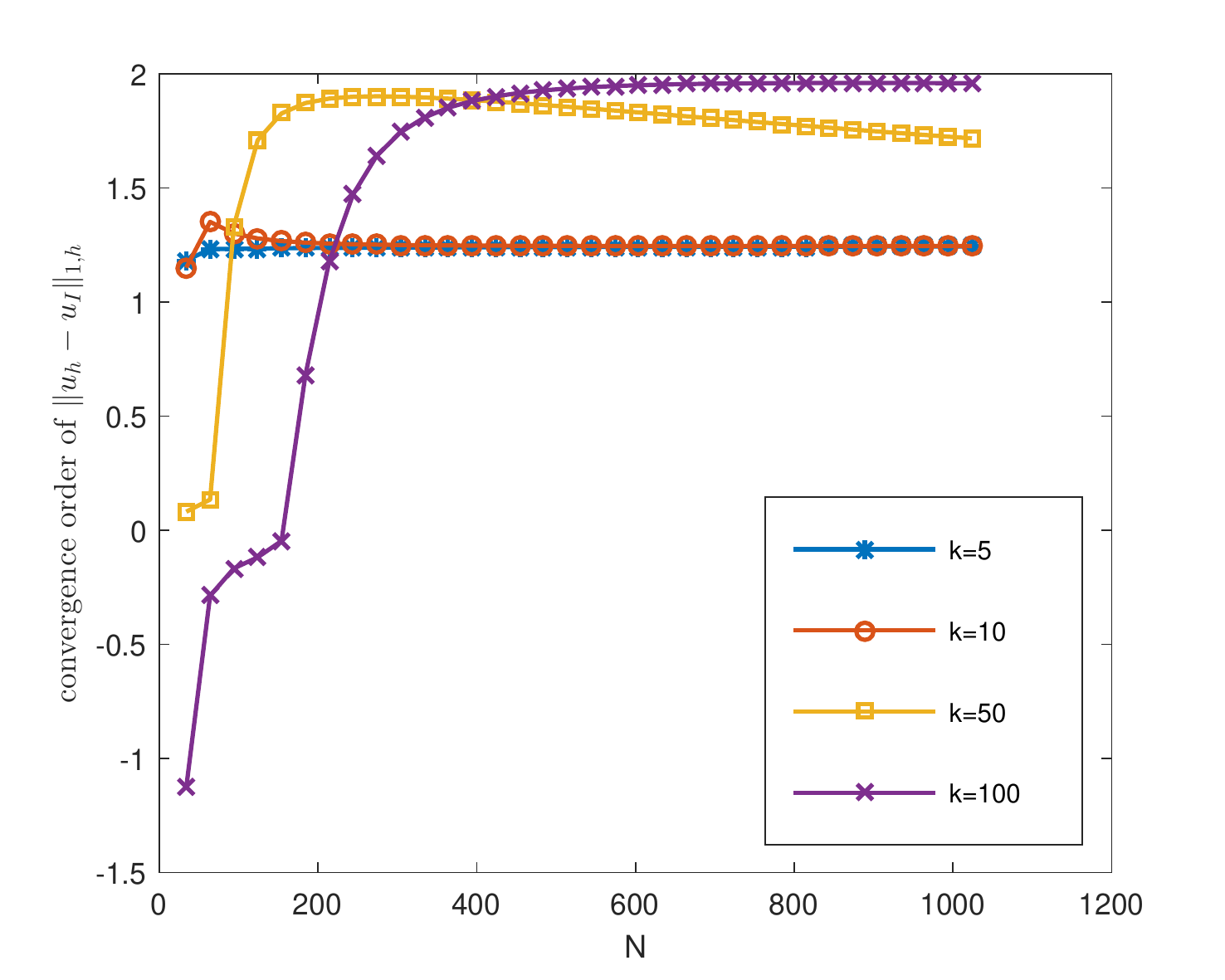}
\caption{ $\norm{u_h-u_I}_{1,h}$ (left) and the convergence order of $\norm{u_h-u_I}_{1,h}$ (right) for $k=5,10,50,100$, where $u_h$ is the numerical solution over the regular pattern uniform triangulation $\T_N$ with $\mu=1/2$. }
\label{fig_error2}
\end{center}
\end{figure}

\begin{figure}[htbp]
\begin{center}
\includegraphics[width=0.49\textwidth]{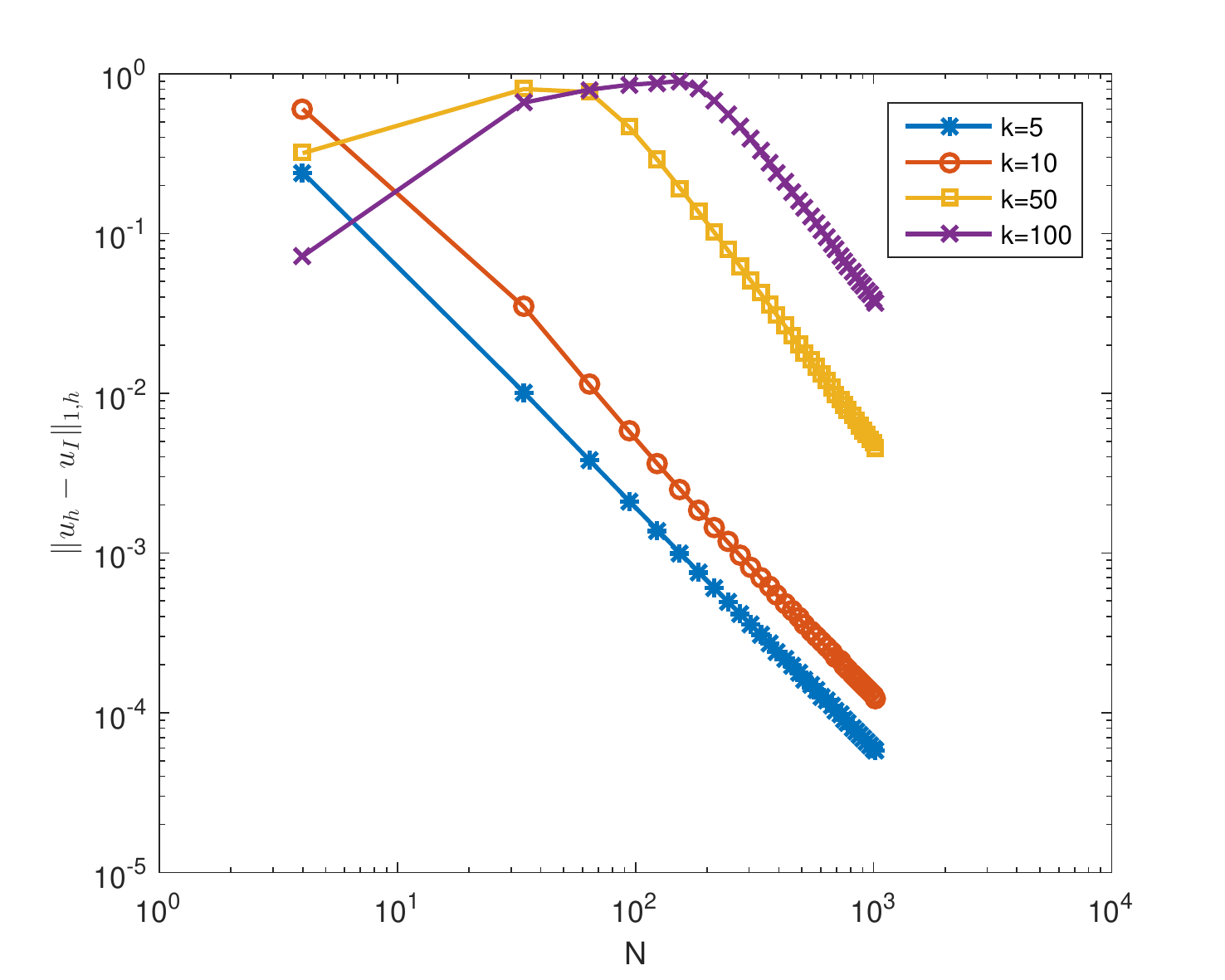}
\includegraphics[width=0.49\textwidth]{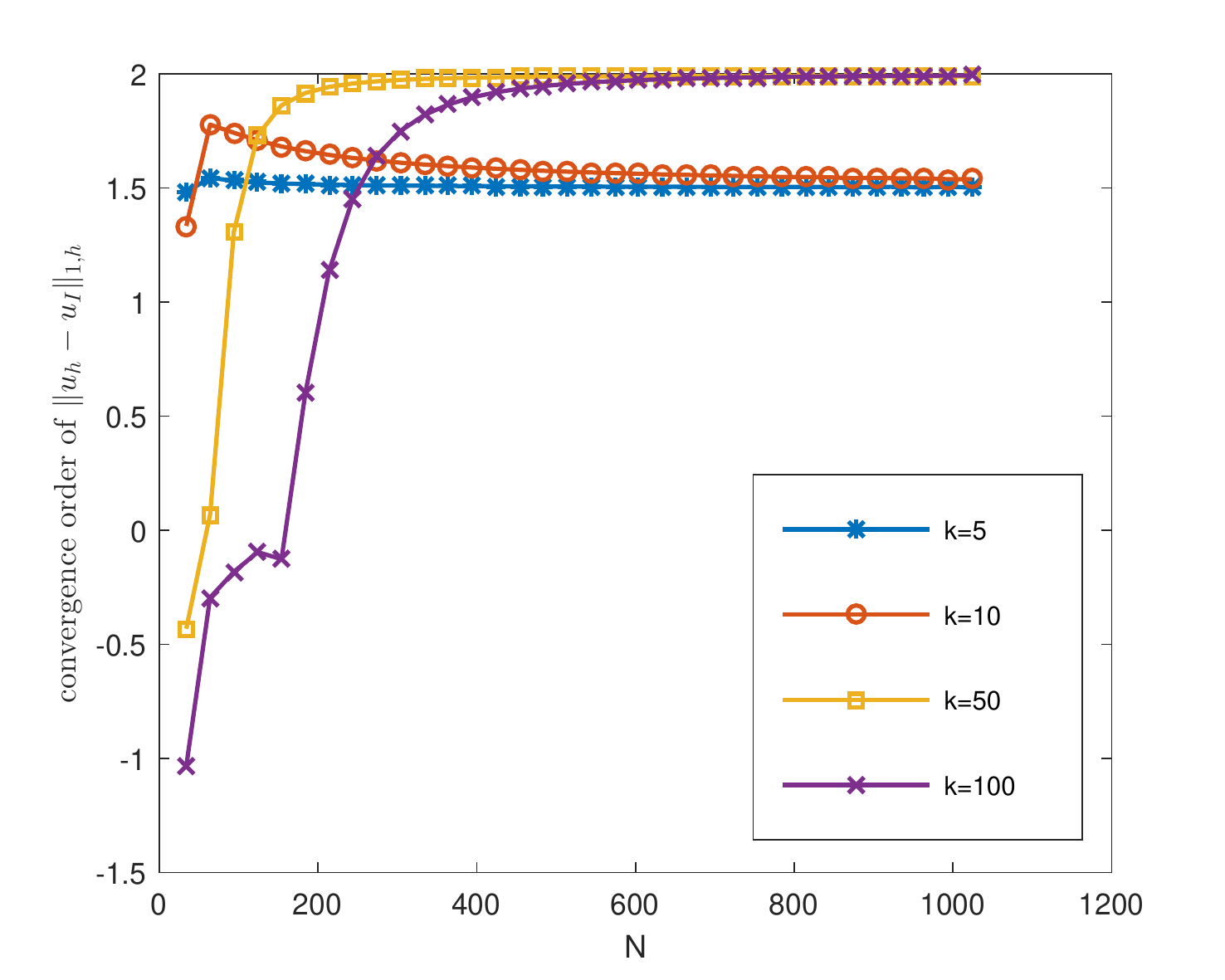}
\caption{ $\norm{u_h-u_I}_{1,h}$ (left) and the convergence order of $\norm{u_h-u_I}_{1,h}$ (right) for $k=5,10,50,100$, where $u_h$ is the numerical solution over the regular pattern uniform triangulation $\T_N$ with $\mu=1$. }
\label{fig_error3}
\end{center}
\end{figure}

\begin{figure}[htbp]
\begin{center}
\includegraphics[width=0.49\textwidth]{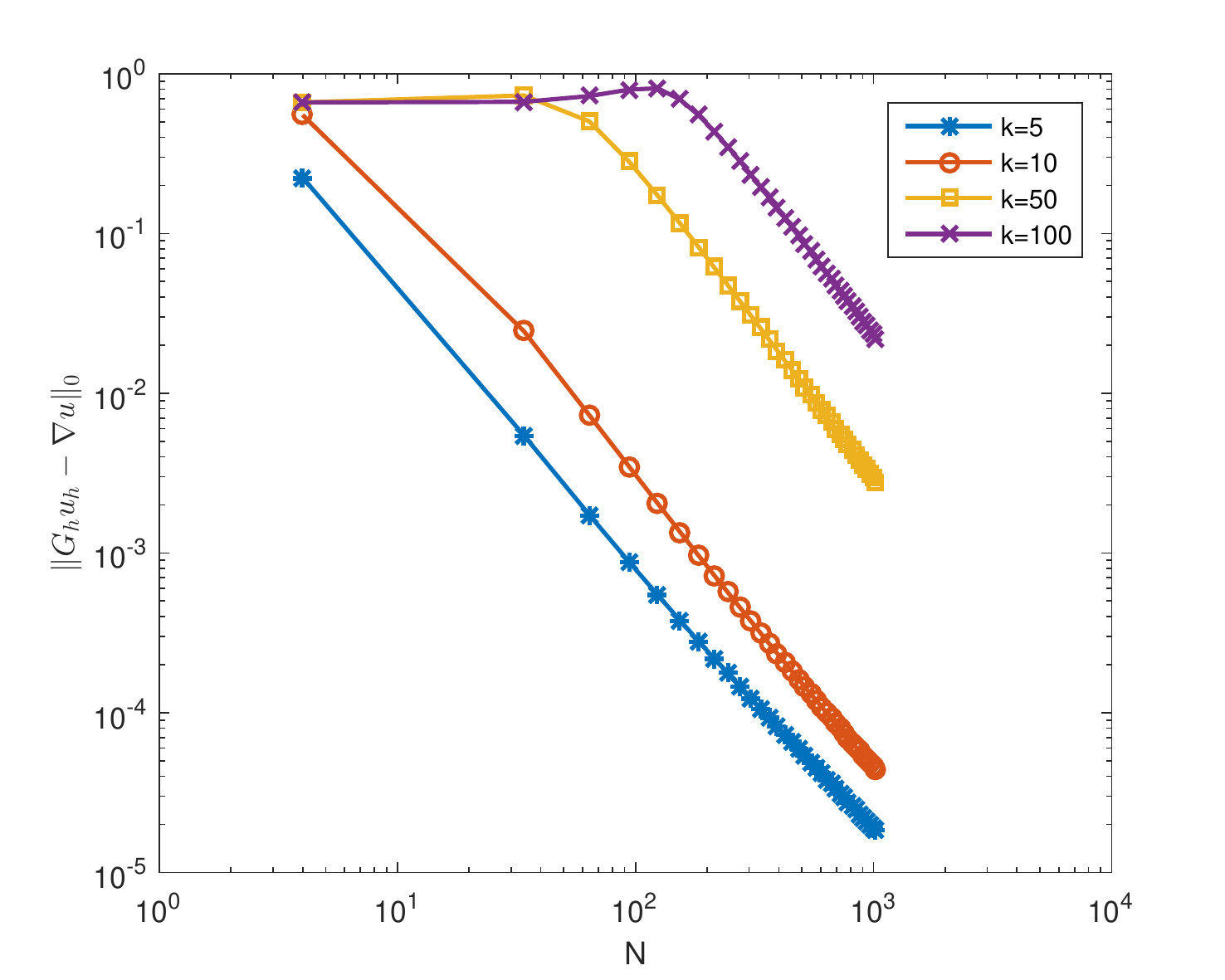}
\includegraphics[width=0.49\textwidth]{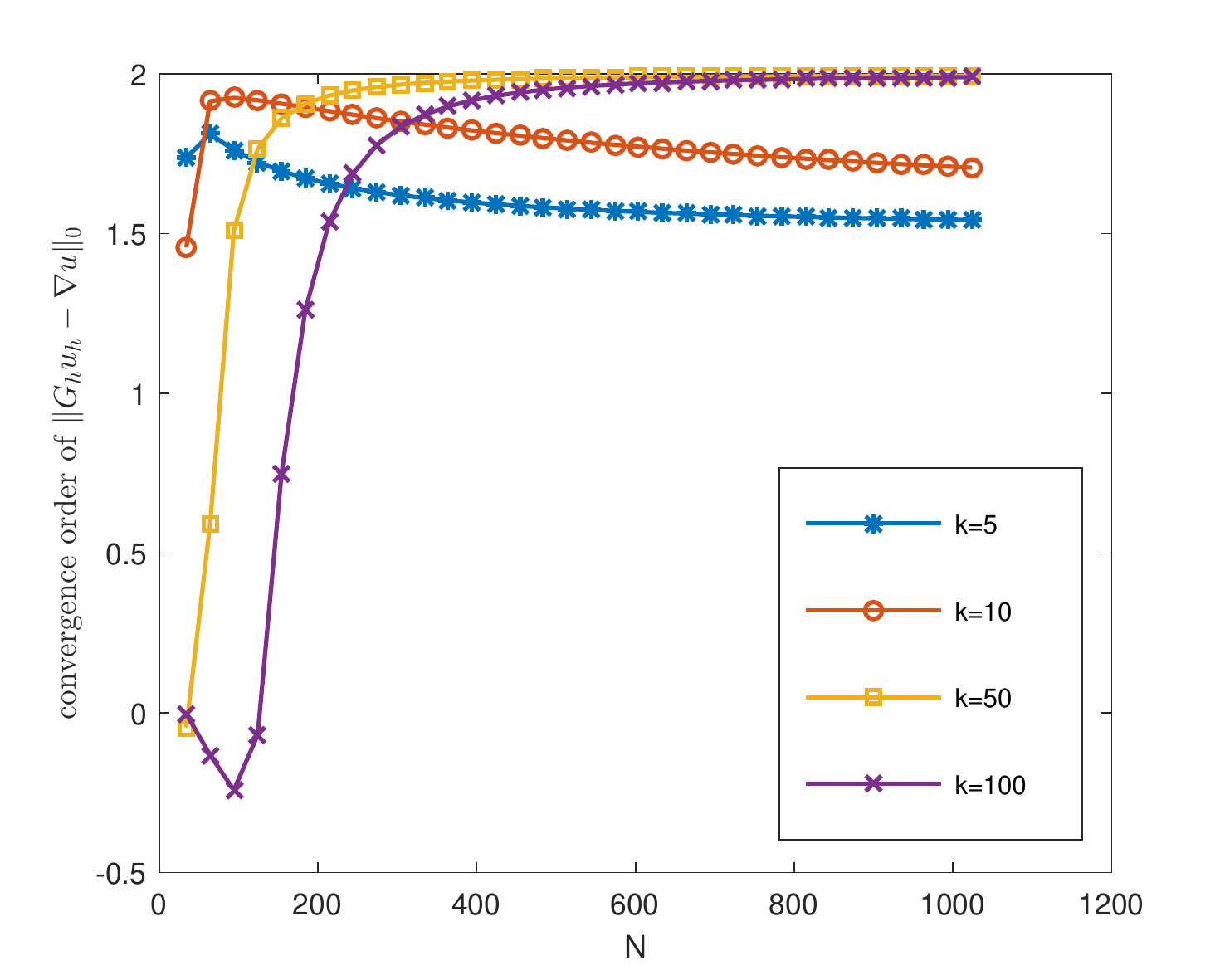}
\caption{ $\norm{G_hu_h-\nabla u}_0$ (left) and the convergence order of $\norm{G_hu_h-\nabla u}_0$ (right) for $k=5,10,50,100$, where $u_h$ is the numerical solution over the regular pattern uniform triangulation $\T_N$ with $\mu=0$.  }
\label{fig_ppr1}
\end{center}
\end{figure}

\begin{figure}[htbp]
\begin{center}
\includegraphics[width=0.49\textwidth]{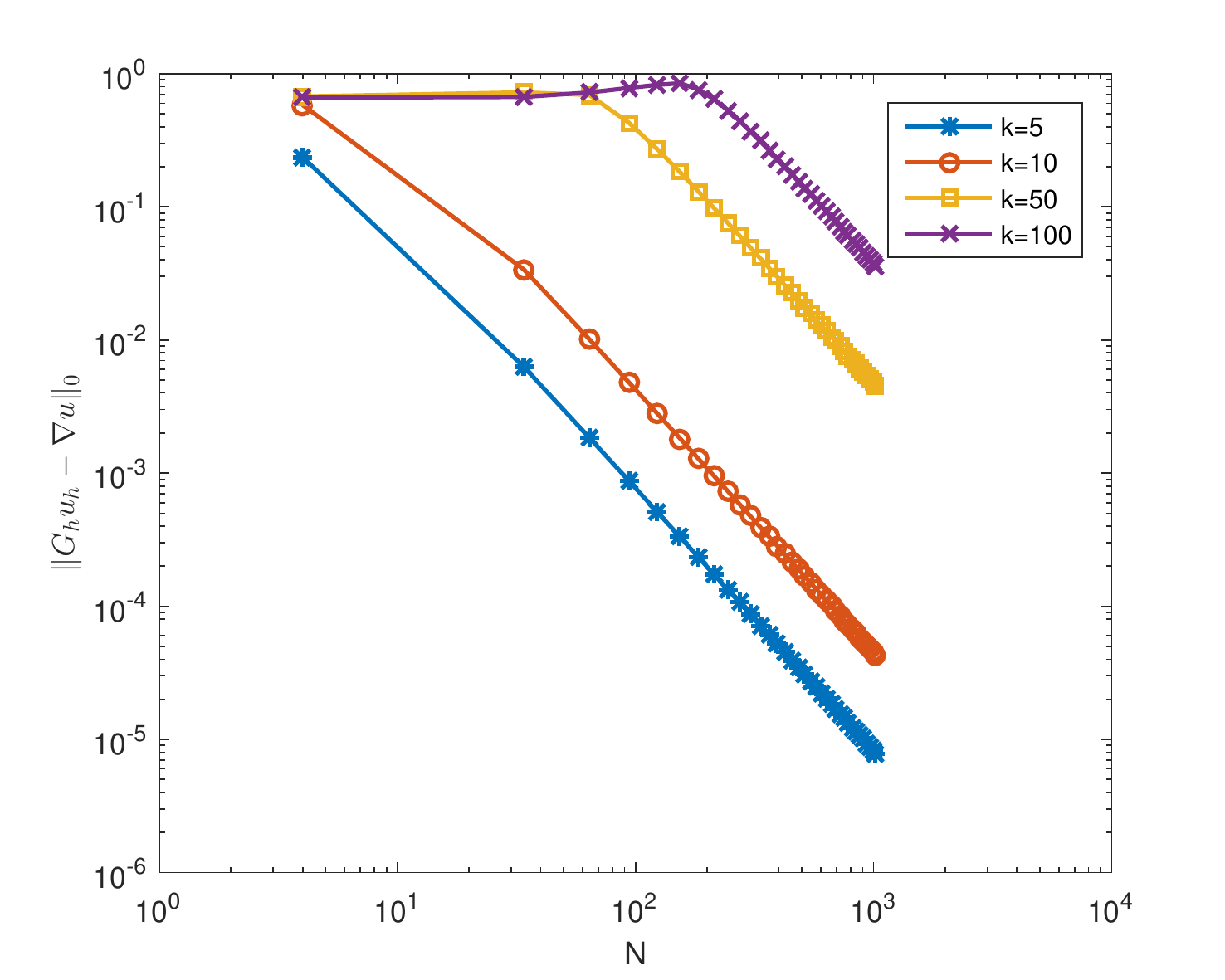}
\includegraphics[width=0.49\textwidth]{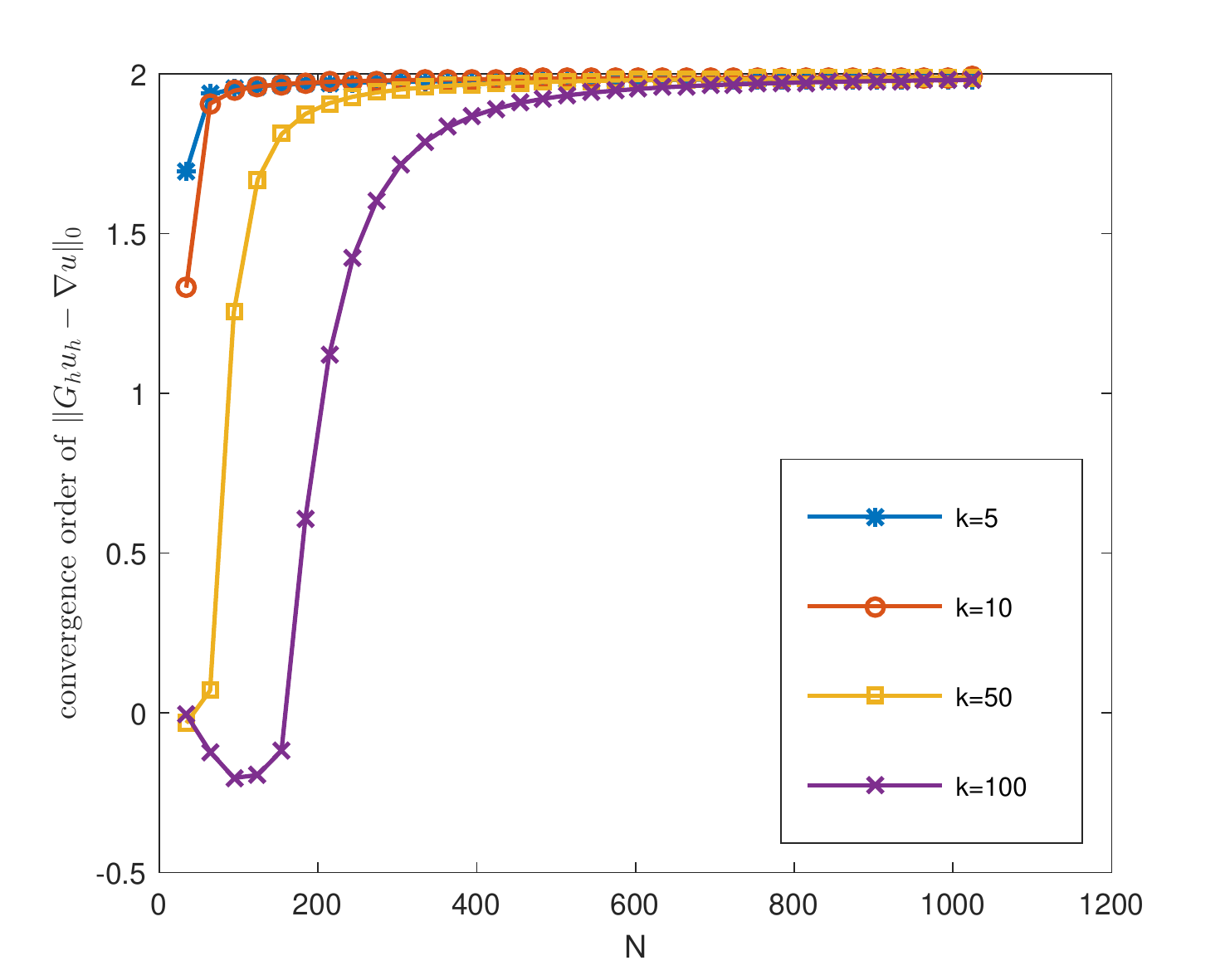}
\caption{ $\norm{G_hu_h-\nabla u}_0$ (left) and the convergence order of $\norm{G_hu_h-\nabla u}_0$ (right) for $k=5,10,50,100$, where $u_h$ is the numerical solution over the regular pattern uniform triangulation $\T_N$ with $\mu=1/2$.  }
\label{fig_ppr2}
\end{center}
\end{figure}

\begin{table}
\begin{center}
\begin{tabular}{|c|c|c|c|c|c|c|}
\hline\noalign{\smallskip}
m & \multicolumn{3}{|c|}{k=10} & \multicolumn{3}{|c|}{k=50} \\
\hline\noalign{\smallskip}
& $E_1$ & $E_2$ & $E_3$  & $E_1$ & $E_2$ & $E_3$  \\
\noalign{\smallskip}\hline\noalign{\smallskip}
4 &      6.9541e-01      &  6.5276e-01 &              &8.9283e-01&8.9323e-01& \\
8 &      4.0277e-01      &  3.0341e-01 &  2.3298e-01  &8.8133e-01&8.5489e-01&8.5858e-01\\
16 &     1.9925e-01      &  9.8125e-02 &  4.9250e-02  &9.8798e-01&8.6095e-01&8.6738e-01\\
32 &     9.7134e-02      &  2.7271e-02 &  8.9123e-03  &1.1286e+00&9.2875e-01&9.9737e-01\\
64 &     4.8052e-02      &  7.2024e-03 &  2.1001e-03  &6.8806e-01&6.0206e-01&6.6522e-01\\
128 &    2.3946e-02      &  1.8968e-03 &  5.9995e-04  &2.3326e-01&1.8737e-01&1.1690e-01\\
256 &    1.1961e-02      &  5.1132e-04 &  1.8734e-04  &8.3218e-02&4.9127e-02&1.0026e-02\\
512 &    5.9790e-03      &  1.4392e-04 &  6.1712e-05  &3.5668e-02&1.2443e-02&9.8175e-04\\
1024 &   2.9892e-03      &  4.2890e-05 &  2.1006e-05  &1.6989e-02&3.1245e-03&2.0274e-04\\
\noalign{\smallskip}\hline
\end{tabular}
\caption{ The numerical errors $E_1:=\abs{u-u_h}_{\cH^1(\T_h)}$,  $E_2:=\norm{\na u-G_h u_h}_0$ and $E_3:=\norm{\na u-RG_H u_h}_0$ with $\mu=0$ over $\T_m$ ($m=4,8,16,\ldots,1024$) for $k=10, 50$.}
\label{tab1}
\end{center}
\end{table}

\begin{table}
\begin{center}
\begin{tabular}{|c|c|c|c|c|c|c|}
\hline\noalign{\smallskip}
m & \multicolumn{3}{|c|}{k=10} & \multicolumn{3}{|c|}{k=50} \\
\hline\noalign{\smallskip}
& $E_1$ & $E_2$ & $E_3$  & $E_1$ & $E_2$ & $E_3$  \\
\noalign{\smallskip}\hline\noalign{\smallskip}
4			&	7.6897e-01	&	7.1018e-01	&				&	9.0817e-01	&	9.0317e-01	&				\\
8			&	4.6583e-01	&	3.8200e-01	&	3.2520e-01	&	8.5422e-01	&	8.5125e-01	&	8.5155e-01	\\
16			&	2.1661e-01	&	1.3692e-01	&	7.2280e-02	&	9.8422e-01	&	8.6858e-01	&	8.8154e-01	\\
32			&	9.9911e-02	&	3.8934e-02	&	9.8256e-03	&	1.0561e+00	&	9.1252e-01	&	9.7172e-01	\\
64			&	4.8407e-02	&	1.0143e-02	&	1.6968e-03	&	9.5362e-01	&	8.7354e-01	&	1.0050e+00	\\
128			&	2.3987e-02	&	2.5721e-03	&	3.9594e-04	&	3.3755e-01	&	3.0452e-01	&	2.7731e-01	\\
256			&	1.1965e-02	&	6.4685e-04	&	9.7480e-05	&	1.0473e-01	&	8.0039e-02	&	2.4874e-02	\\
512			&	5.9791e-03	&	1.6231e-04	&	2.4212e-05	&	3.9086e-02	&	2.0213e-02	&	1.7802e-03	\\
1024		&	2.9891e-03	&	4.0800e-05	&	6.0608e-06	&	1.7453e-02	&	5.0653e-03	&	1.9414e-04	\\
\noalign{\smallskip}\hline
\end{tabular}
\caption{ The numerical errors $E_1:=\abs{u-u_h}_{\cH^1(\T_h)}$,  $E_2:=\norm{\na u-G_h u_h}_0$ and $E_3:=\norm{\na u-RG_H u_h}_0$ with $\mu=1$ over $\T_m$ ($m=4,8,16,\ldots,1024$) for $k=10, 50$.}
\label{tab2}
\end{center}
\end{table}

\begin{table}
\begin{center}
\begin{tabular}{|c|c|c|c|c|c|c|}
\hline\noalign{\smallskip}
m & \multicolumn{3}{|c|}{k=10} & \multicolumn{3}{|c|}{k=50} \\
\hline\noalign{\smallskip}
& $E_1$ & $E_2$ & $E_3$  & $E_1$ & $E_2$ & $E_3$  \\
\noalign{\smallskip}\hline\noalign{\smallskip}
4 	&	8.1033e-01	&	7.4112e-01	&		&	8.9147e-01	&	8.9225e-01	&		\\
8 	&	4.8179e-01	&	4.0019e-01	&	3.4405e-01	&	8.5410e-01	&	8.5123e-01	&	8.5150e-01	\\
16 	&	2.1893e-01	&	1.4118e-01	&	7.2934e-02	&	9.8776e-01	&	8.6974e-01	&	8.8355e-01	\\
32 	&	1.0011e-01	&	3.9575e-02	&	9.4564e-03	&	1.0569e+00	&	9.1404e-01	&	9.7362e-01	\\
64 	&	4.8420e-02	&	1.0227e-02	&	1.6651e-03	&	9.5842e-01	&	8.7845e-01	&	1.0097e+00	\\
128 	&	2.3987e-02	&	2.5827e-03	&	3.9469e-04	&	3.3887e-01	&	3.0595e-01	&	2.8075e-01	\\
256 	&	1.1965e-02	&	6.4817e-04	&	9.7438e-05	&	1.0488e-01	&	8.0231e-02	&	2.5034e-02	\\
512 	&	5.9791e-03	&	1.6221e-04	&	2.4213e-05	&	3.9099e-02	&	2.0237e-02	&	1.7813e-03	\\
1024 	&	2.9891e-03	&	3.6859e-05	&	9.2329e-06	&	1.7454e-02	&	5.0672e-03	&	1.9374e-04	\\
\noalign{\smallskip}\hline
\end{tabular}
\caption{ The numerical errors $E_1:=\abs{u-u_h}_{\cH^1(\T_h)}$,  $E_2:=\norm{\na u-G_h u_h}_0$ and $E_3:=\norm{\na u-RG_H u_h}_0$ with $\mu=2$ over $\T_m$ ($m=4,8,16,\ldots,1024$) for $k=10, 50$.}
\label{tab3}
\end{center}
\end{table}

\begin{table}
\begin{center}
\begin{tabular}{|c|c|c|c|c|c|c|}
\hline\noalign{\smallskip}
m & \multicolumn{2}{|c|}{k=10} & \multicolumn{2}{|c|}{k=60} & \multicolumn{2}{|c|}{k=120}\\
\hline\noalign{\smallskip}
& $E_1$ & $\eta_h$ & $E_1$  & $\eta_h$ & $E_1$ & $\eta_h$  \\
\noalign{\smallskip}\hline\noalign{\smallskip}
4 	&	6.95e-01	&		&	8.35e-01	&		&	8.22e-01	&		\\
8 	&	4.03e-01	&	3.76e-01	&	9.25e-01	&	2.22e-01	&	8.51e-01	&	3.36e-02	\\
16 	&	1.99e-01	&	1.96e-01	&	9.95e-01	&	5.70e-01	&	9.16e-01	&	2.33e-01	\\
32 	&	9.71e-02	&	9.71e-02	&	1.06e+00	&	4.93e-01	&	9.89e-01	&	5.26e-01	\\
64 	&	4.81e-02	&	4.81e-02	&	9.80e-01	&	3.50e-01	&	1.04e+00	&	4.47e-01	\\
128 	&	2.39e-02	&	2.40e-02	&	3.63e-01	&	2.59e-01	&	1.19e+00	&	3.16e-01	\\
256 	&	1.20e-02	&	1.20e-02	&	1.18e-01	&	1.12e-01	&	6.71e-01	&	3.53e-01	\\
512 	&	5.98e-03	&	5.98e-03	&	4.57e-02	&	4.54e-02	&	1.95e-01	&	1.79e-01	\\
1024 	&	2.99e-03	&	2.99e-03	&	2.08e-02	&	2.08e-02	&	6.05e-02	&	5.98e-02	\\
\noalign{\smallskip}\hline
\end{tabular}
\caption{ The numerical errors $E_1:=\abs{u-u_h}_{\cH^1(\T_h)}$ and $\eta_h$ with $\mu=0$ over $\T_m$ ($m=4,8,16,\ldots,1024$) for $k=10, 60, 120$.}
\label{tab4}
\end{center}
\end{table}

\begin{table}
\begin{center}
\begin{tabular}{|c|c|c|c|c|c|c|}
\hline\noalign{\smallskip}
m & \multicolumn{2}{|c|}{k=10} & \multicolumn{2}{|c|}{k=60} & \multicolumn{2}{|c|}{k=120}\\
\hline\noalign{\smallskip}
& $E_1$ & $\eta_h$ & $E_1$  & $\eta_h$ & $E_1$ & $\eta_h$  \\
\noalign{\smallskip}\hline\noalign{\smallskip}
4 	&	7.69e-01	&		&	8.42e-01	&		&	8.22e-01	&		\\
8 	&	4.66e-01	&	3.57e-01	&	8.87e-01	&	5.44e-02	&	8.55e-01	&	8.54e-02	\\
16 	&	2.17e-01	&	1.96e-01	&	9.29e-01	&	4.30e-01	&	8.78e-01	&	3.58e-02	\\
32 	&	9.99e-02	&	9.84e-02	&	1.02e+00	&	4.07e-01	&	9.30e-01	&	3.89e-01	\\
64 	&	4.84e-02	&	4.84e-02	&	1.14e+00	&	3.09e-01	&	1.01e+00	&	3.70e-01	\\
128 	&	2.40e-02	&	2.40e-02	&	5.43e-01	&	3.19e-01	&	1.11e+00	&	2.77e-01	\\
256 	&	1.20e-02	&	1.20e-02	&	1.61e-01	&	1.49e-01	&	9.68e-01	&	3.36e-01	\\
512 	&	5.98e-03	&	5.98e-03	&	5.35e-02	&	5.30e-02	&	3.00e-01	&	2.60e-01	\\
1024 	&	2.99e-03	&	2.99e-03	&	2.19e-02	&	2.19e-02	&	8.35e-02	&	8.24e-02	\\
\noalign{\smallskip}\hline
\end{tabular}
\caption{ The numerical errors $E_1:=\abs{u-u_h}_{\cH^1(\T_h)}$ and $\eta_h$ with $\mu=1$ over $\T_m$ ($m=4,8,16,\ldots,1024$) for $k=10, 60, 120$.}
\label{tab5}
\end{center}
\end{table}

\bibliographystyle{siam}
\bibliography{referrence}
\end{document}